\newtheorem{theorem}{Theorem}[section]
\newtheorem{lemma}[theorem]{Lemma}
\newtheorem{proposition}[theorem]{Proposition}
\theoremstyle{definition}
\theoremstyle{remark}
\newtheorem{remark}[theorem]{Remark}
\numberwithin{equation}{section}
\newcommand{\ba}{\begin{array}}
\newcommand{\ea}{\end{array}}
\newcommand{\f}{\frac}
\newcommand{\Om}{\Omega}
\newcommand{\la}{\lambda}
\newcommand{\ds}{\displaystyle}
\begin{document}
\date{}
\title{ \bf\large{Asymptotic profiles of basic reproduction number for  epidemic spreading in heterogeneous environment}\thanks{S. Chen is supported by National Natural Science Foundation of China (No 11771109), and J. Shi is supported by US-NSF grants DMS-1715651 and DMS-1853598.}}
\author{Shanshan Chen\textsuperscript{1}\footnote{Email: chenss@hit.edu.cn},\ \ Junping Shi\textsuperscript{2}\footnote{Corresponding Author, Email: jxshix@wm.edu}
 \\
{\small \textsuperscript{1} Department of Mathematics, Harbin Institute of Technology,\hfill{\ }}\\
\ \ {\small Weihai, Shandong, 264209, P.R.China.\hfill{\ }}\\
{\small \textsuperscript{2} Department of Mathematics, College of William and Mary,\hfill{\ }}\\
\ \ {\small Williamsburg, Virginia, 23187-8795, USA.\hfill {\ }} }
\maketitle

\begin{abstract}
{The effect of diffusion rates on the basic reproduction number of  a general compartmental reaction-diffusion epidemic model in a heterogeneous environment is considered.  It is shown when the diffusion rates tend to zero, the limit of the basic reproduction number is the maximum value of the local reproduction number on the spatial domain. On the other hand when the diffusion rates tend to infinity,  the basic reproduction number tends to the spectral radius of the \lq\lq average\rq\rq~next generation matrix. These asymptotic limits of basic reproduction number hold for a class of general spatially heterogeneous  compartmental epidemic models, and they are  applied to a wide variety of examples.
}

\noindent {\bf{Keywords}}: Basic reproduction number; reaction-diffusion; \\ heterogeneous environment; compartmental epidemic models.
\end{abstract}

\section{Introduction}

In mathematical modeling of infectious diseases, the basic reproduction number $R_0$ is a key indicator for disease transmission. When $R_0<1$, the disease declines and eventually vanishes; and when $R_0>1$, the disease spreads in the population and an outbreak is possible \cite{1991book}. Roughly speaking, the basic reproduction number $R_0$ is the average number of healthy people infected by one contagious person over the course of the infectious period. In more mathematically rigorous terms, for ordinary differential equation epidemic models which is non-spatial,  $R_0$ is defined as the spectral radius of the next generation matrix \cite{Diekmann,Driessche}, which is established in  a general framework of compartmental disease transmission models. This definition is also  generalized to epidemic models with infinite-dimensional state space \cite{Thieme2009}.

As the environment in which the disease spreads is spatially heterogeneous, the transmission and spreading of the infectious disease is inevitably affected by the spatial structure and heterogeneity of the environment.  These factors can be incorporated into underlying mathematical models to show the effect of spatial heterogeneity on the disease transmission. The spatial structure and heterogeneity can be modeled in a discrete space using an ordinary differential equation patch model \cite{Allenpatch,Arino2003,lloyd1996spatial,Tien2015}, or they can be modeled in a continuous space using a reaction-diffusion-advection partial differential equation model \cite{Allen,CuiLou,WangZhao,WuZou}. The notion of the basic reproduction number is also extended to both classes of models. In particular a theory of basic reproduction numbers for general reaction-diffusion compartmental disease transmission models is recently developed in \cite{WangZhao}.


For spatially heterogeneous reaction-diffusion epidemic models, the basic reproduction number $R_0$ usually depends on the diffusion rates of populations. For example,
in the reaction-diffusion SIS epidemic model considered in \cite{Allen}:
\begin{equation*}
    \begin{cases}
    \ds\f{\partial I}{\partial t}=d_I\Delta I+\beta(x)SI-\gamma(x)I,&x\in\Omega,\;t>0,\\
\ds\f{\partial S}{\partial t}=d_S\Delta S-\beta(x)SI+\gamma(x)I,&x\in\Omega,\;t>0,\\
\ds\f{\partial I}{\partial \nu}=\ds\f{\partial S}{\partial \nu}=0, &x\in\partial \Omega,\;t>0,
    \end{cases}
\end{equation*}
where $\beta(x)$ is the transmission rate, $\gamma(x)$ is the removal rate, and $d_I,d_S$ are the diffusion rates of infectious and susceptible populations respectively,
it was shown that the basic reproduction number is defined as
\begin{equation}
R_0=\sup\left\{\ds\f{\int_\Omega \beta\phi^2dx }{\int_\Omega \left(d_I|\nabla \phi|^2+\gamma \phi^2\right)dx}:\phi\in H^1(\Omega),\phi\ne 0\right\}.
\end{equation}
Moreover it was shown in \cite{Allen} that $R_0$ has the following asymptotic profile with respect to the infectious population diffusion rate $d_I$:
\begin{equation}\label{prof}
\lim_{d_I\to 0}R_0= \max_{x\in\overline \Omega}\f{\beta (x)}{\gamma(x)},\;\;
\lim_{d_I\to \infty}R_0= \ds\f{\int_\Omega \beta dx }{\int_\Omega \gamma dx}.
\end{equation}
Notice that the quantity $\beta(x)/\gamma(x)$ is the local basic reproduction number at $x$ when there is no spatial movement, hence the global basic reproduction number tends to the maximum of local one as the diffusion rate tends to zero. On the other hand, the limit of basic reproduction number for large diffusion rate is the ratio of average transmission rate and average removal rate.
Similar asymptotic profiles for $R_0$ were also obtained in \cite{MagalWu} for several kinds of other spatially heterogeneous epidemic reaction-diffusion models. The results in \cite{MagalWu} are based on the fact that $R_0$ equals the spectral radius
of a product of the local basic reproduction number and strongly positive compact linear operators with spectral radii one.

In this paper, we aim to characterize limiting profiles of the basic reproduction number $R_0$ for general spatially heterogeneous reaction-diffusion compartmental epidemic models for small or large diffusion rates. 

We consider the following reaction-diffusion compartmental epidemic model
\begin{equation}\label{main}
\begin{cases}
\ds\f{\partial u_i}{\partial t}=d_i\Delta u_i+f_i(x,u), &x\in\Omega,\;t>0,\;1\le i\le n, \\
\ds\f{\partial u_i}{\partial \nu}=0, &x\in\partial \Omega,\;t>0,\;1\le i\le n,
\end{cases}
\end{equation}
which was proposed in \cite{WangZhao}.
Here $u_i$ is the density of the population in the $i$-th compartment, $d_i>0$ is constant and represents the diffusion coefficient of population $u_i$, $\Omega$ is a bounded domain in $\mathbb{R}^N$ ($N\ge1$) with smooth boundary $\partial \Omega$, $\nu$ is the outward unit normal vector at $x\in\partial \Omega$, and $f_i(x,u)$ is the reaction term in the $i$-th compartment. Moreover, $$f_i(x,u)=\mathcal F_i(x,u)-\mathcal V_i(x,u),$$
where $\mathcal F_i(x,u)$ is the input rate of newly infected individuals in the $i$-th compartment, $\mathcal V_i(x,u)=\mathcal V_i^-(x,u)-\mathcal V_i^+(x,u)$,
$\mathcal V_i^{+}(x,u)$ is the rate of transfer of individuals into the $i$-th compartment by all other means, and $\mathcal V_i^{-}(x,u)$ is the rate of transfer of individuals out of the $i$-th compartment. More biological explanation of model \eqref{main} could be found in \cite{WangZhao}.
In this paper, we will show the asymptotic profiles of $R_0$ for model \eqref{main} as $(d_1,\cdots,d_n)\to (0,\dots,0)$ and $(d_1,\dots,d_n)\to (\infty,\dots,\infty)$. Our results indict that the trend set in \cite{Allen,MagalWu} holds true for epidemic models in much more general setting: in small diffusion limit, the global basic reproduction number tends to the maximum of local basic reproduction number, and in large diffusion limit, the global basic reproduction number tends to some kind of spatial average of  local basic reproduction number.

There are extensive results on reaction-diffusion epidemic models. The asymptotic profiles of the endemic steady states were considered in
\cite{Allen,Peng2009,Peng2013,WuZou} and references therein, and the global dynamics of the epidemic models could be found in \cite{CaiWang,DengWu,LaiZou,WangFB2,WangFB1,WebWu2018,PengLiu,WangZhaoWang}.
The effect of diffusion and advection rates on $R_0$ and the stability of the disease-free steady state for a reaction-diffusion-advection epidemic model was considered in \cite{CuiLou}, see also \cite{CuiLamLou,HuangJin,KuoPeng,JinLewis} for reaction-diffusion-advection epidemic models. The definition of $R_0$ for time-periodic reaction-diffusion epidemic models was given in \cite{Ait,zhaoperi2,zhaoperi}, and the global dynamics for a time-periodic or almost  space periodic reaction-diffusion SIS epidemic model was studied in \cite{PengZhao,WangLiWang}.
The reaction-diffusion epidemic models with free boundary conditions were investigated in \cite{CaoLiYang,LinZhu2,LinZhu} and references therein, and reaction-diffusion epidemic models with time delays were also studied extensively, see e.g. \cite{BaiPengZhao,LouZhao,WuZhao}.

Throughout the paper, we use the following notations.
For $n\ge 1$,
\begin{equation}
\begin{split}
\mathbb R^n_+=&\{u=(u_1,\dots,u_n): u_i\ge0\;\;\text{for any}\;\;i=1,\dots,n\},\\
C\left(\overline\Omega,\mathbb R_+^n\right)=&\{(u_1(x),\dots,u_n(x)): u_i(x)\left(\in C(\overline\Omega,\mathbb R)\right)\ge0\;\;\text{for any}\;\;i=1,\dots,n\}.
\end{split}
\end{equation}
For a closed and linear operator $A$, we denote the spectral radius of $A$ by $r(A)$, the spectral set of $A$ by $\sigma(A)$, and the spectral bound of $A$ by
$$s(A):=\sup\{\mathcal R e\la:\la\in\sigma(A)\}.$$
Let $P=\left(P_{ij}\right)_{1\le i, j\le l}$ and $Q=\left(Q_{ij}\right)_{1\le i, j\le l}$ be $l\times l$ $(l\ge1)$ real-valued matrices, and let $Q(x)=\left(Q_{ij}(x)\right)_{1\le i, j\le l}$ be an $l\times l$ matrix-valued function.

$P\ge Q$ means $P_{ij}\ge Q_{ij}$ for each $1\le i, j\le l$.

$P> Q$ means $P_{ij}> Q_{ij}$ for each $1\le i,j\le l$.

$\ds\lim_{x\to x_0} Q(x)=Q$ means $\ds\lim_{x\to x_0} Q_{ij}(x)=Q_{ij}$ for each $1\le i, j\le l$.

The matrix $P$ is called positive if all entries of $P$ are non-negative and there exists at least one positive entry.

The matrix $P$ is called zero if all entries of $P$ are zero.

The matrix $P$ is called cooperative (or quasi-positive) if all off-diagonal entries of $P$ are non-negative, i.e., $P_{ij}\ge0$ for $i\ne j$.

Moreover, $(d_1,\dots,d_n)\to(0,\dots,0)$ means $\ds\max_{1\le j=1\le n}d_j\to0$.

$(d_1,\dots,d_n)\to(\infty,\dots,\infty)$ means $\ds\min_{1\le j\le n}d_j\to\infty$.

The remaining part of the paper is organized as follows. In Section 2, we show some preliminaries for further applications. In Section
2 and 3, We show the asymptotic profiles of $R_0$ for model \eqref{main} as $(d_1,\cdots,d_n)\to (0,\dots,0)$ and $(d_1,\dots,d_n)\to (\infty,\dots,\infty)$, respectively. In Section 4, we apply the  theoretical results to some concrete examples.

\section{Some preliminaries}
In this section, we recall the definition of basic reproduction number for reaction-diffusion epidemic models in \cite{WangZhao}.
Assume that the population $u=(u_1,\dots, u_n)^T$ of model \eqref{main} is divided into two types: infected compartments, labeled by $i=1,2,\dots,m$, and uninfected compartments, labeled by
$i=m+1,\dots,n$. We set
\begin{equation}\label{dsdi}
\begin{split}
&u_I=(u_1,\dots,u_m)^T,\;\;u_S=(u_{m+1},\dots,u_n)^T,\\
&d_I=(d_1,\dots,d_m)^T,\;\;d_S=(d_{m+1},\dots,d_n)^T,\\
&d_I\Delta u_I=(d_1\Delta u_1,\dots,d_m\Delta u_m)^T,\;\;d_S\Delta u_S=(d_{m+1}\Delta u_{m+1},\dots,d_{n}\Delta u_n)^T,\\
&f_I(x,u)=\left(f_1(x,u),\dots,f_m(x,u)\right)^T,\;\;f_S(x,u)=\left(f_{m+1}(x,u),\dots,f_n(x,u)\right)^T.
\end{split}
\end{equation}
Let $$U_s:=\{u\ge0:u_i=0 \;\text{for any}\;i=1,\dots,m\}$$ denote the set of all disease-free states of \eqref{main}, and assume that
model \eqref{main} has a disease-free steady state
\begin{equation}\label{equi}
u^0(x)=\left(0,\dots,0,u_{m+1}^0(x),\dots,u_{n}^0(x)\right)^T,
\end{equation}
where $u_i^0(x)>0$ for any $i=m+1,\dots,n$ and $x\in\overline\Omega$.
Define the following three matrices:
\begin{equation}\label{maFV}
\begin{split}
F(x,u)=&\left(F_{ij}(x,u)\right)_{1\le i,j\le m}=\left(\ds\f{\partial \mathcal F_i(x,u)}{\partial u_j}\right)_{1\le i,j\le m},\\ V(x,u)=&\left(V_{ij}(x,u)\right)_{1\le i,j\le m}=\left(\ds\f{\partial \mathcal V_i(x,u)}{\partial u_j}\right)_{1\le i,j\le m},\\
 M(x,u)=&\left(M_{ij}(x,u)\right)_{1\le i,j\le n-m}=\left(\ds\f{\partial f_{i+m}(x,u)}{\partial u_{j+m}}\right)_{1\le i,j\le n-m},
\end{split}
\end{equation}
and let
 \begin{equation}\label{Bini}
B:=d_I\Delta - V\left(x,u^0(x)\right).
\end{equation}
The following assumptions are imposed on model \eqref{main}: (see assumptions (A1)-(A6) in \cite{WangZhao})
\begin{enumerate}
\item [(A1)] For each $1\le i\le n$, functions $\mathcal F_i(x,u)$, $\mathcal V^+_i(x,u)$, $\mathcal V_i^-(x,u)$ are non-negative and continuously differentiable on
$\overline \Omega \times \mathbb R_+^n$.
\item [(A2)] If $u_i=0$, then $\mathcal V_i^-=0$.
\item [(A3)] $\mathcal F_i=0$ for $i>m$.
\item [(A4)] If $u\in U_s$, then $\mathcal F_i=\mathcal V_i^+=0$ for $i=1,\dots,m$.
\item [(A5)] $ M(x,u^0(x))$ is cooperative for any $x\in\overline\Omega$, and $$s\left(d_S\Delta + M(x,u^0(x))\right)<0.$$
\item [(A6)] $- V(x,u^0(x))$ is cooperative for any $x\in\overline\Omega$, and $s\left(B\right)=s(d_I\Delta - V\left(x,u^0(x)\right))<0$.
\end{enumerate}
Assumptions (A1)-(A6) are satisfied for most reaction-diffusion epidemic models.

Denote
\begin{equation}\label{X}
X=C\left(\overline\Omega,\mathbb{R}^m\right)\;\;\text{and}\;\;X_+=C\left(\overline\Omega,\mathbb{R}_+^m\right).
\end{equation}
$X$ is an ordered Banach space, and $X_+$ is a positive cone with nonempty interior.
Let $T(t)$ be the semigroup generated by $B$ on $X$, i.e., $T(t)$ is the solution semigroup associated with the following linear reaction-diffusion system:
\begin{equation}
\begin{cases}
\ds\f{\partial u_I}{\partial t}=d_I\Delta u_I-V\left(x,u^0(x)\right)u_I,&x\in\Omega,\;t>0,\\
\ds\f{\partial u_I}{\partial \nu}=0,&x\in\partial \Omega,\;t>0.
\end{cases}
\end{equation}
It follows from the comparison principle (see \cite[Theorem 3.12]{Thieme2009}) and assumption (A6) that $B$ is resolvent-positive, $T(t)$ is positive (i.e., $T(t)X_+\subset X_+$ for all $t>0$), $s(B)<0$, and
$-B^{-1}\phi=\int_0^\infty T(t)\phi dt$ for $\phi\in X$.
Note that $ F\left(x,u^0(x)\right)$ is a positive matrix, and it can also be viewed as a positive operator on $C\left(\overline\Omega,\mathbb{R}^m\right)$:
$$\phi\in C\left(\overline\Omega,\mathbb{R}^m\right)\mapsto F\left(x,u^0(x)\right)\phi.$$
Clearly,
the linear operator $B+F(x,u^0(x))$ is also resolvent-positive. Then it follows from \cite[Section 3]{WangZhao} (or \cite[Theorem 3.5]{Thieme2009}) that:
\begin{proposition}
Assume that (A1)-(A6) hold. Then the basic reproduction number is defined by
$$R_0=r\left(- F(x,u^0(x))B^{-1}\right).$$
Moreover, the following statements hold.
\begin{enumerate}
\item [(i)] $R_0-1$ has the same sign as $s\left(B+ F(x,u^0(x))\right)$.
\item [(ii)] If $R_0<1$, then $u^0(x)$ is locally asymptotically stable for system \eqref{main}.
\end{enumerate}
\end{proposition}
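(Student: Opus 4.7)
The plan is to recognize that the characterization $R_0=r(-F(x,u^0)B^{-1})$ together with conclusions (i) and (ii) are consequences of Thieme's abstract theory of resolvent-positive operators \cite{Thieme2009}, once $B$ and $F(x,u^0)$ have been cast in that framework. The bulk of the work is checking the hypotheses of Thieme's theorem on the concrete ordered Banach space $X=C(\cOm,\R^m)$; the conclusions then transfer directly.

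First I would set up the functional-analytic picture. On $X=C(\cOm,\R^m)$ with positive cone $X_+$ of nonempty interior, assumption (A6) says $-V(x,u^0(x))$ is cooperative and $s(B)<0$, so by the parabolic comparison principle $B$ generates a positive $C_0$-semigroup $T(t)$ with exponential decay. Hence $B$ is resolvent-positive and $-B^{-1}\phi=\int_0^\infty T(t)\phi\,dt$ is a bounded positive operator. Assumptions (A1) and (A4) ensure $F(x,u^0(x))$ has non-negative entries for every $x\in\cOm$, so pointwise multiplication by this matrix defines a positive bounded operator on $X$. Consequently $-F(x,u^0)B^{-1}$ is positive and bounded, and $R_0:=r(-F(x,u^0)B^{-1})$ is well defined. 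Part (i) is then the standard next-generation equivalence of Theorem 3.5 in \cite{Thieme2009}: for resolvent-positive $B$ with $s(B)<0$ and positive bounded $F$, $r(-FB^{-1})-1$ has the same sign as $s(B+F)$. All hypotheses have just been verified, so (i) follows with no further work.

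For part (ii), I would linearize \eqref{main} at $u^0(x)$. The key structural observation is that the $n\times n$ Jacobian of the reaction term is block lower-triangular,
$$J(x)=\begin{pmatrix} F(x,u^0)-V(x,u^0) & 0 \\ \ast(x) & M(x,u^0) \end{pmatrix},$$
where the upper-right $m\times(n-m)$ zero block follows from (A2) and (A4): for $i\le m$ the functions $\mathcal F_i$, $\mathcal V_i^+$ vanish identically on $U_s$ and $\mathcal V_i^-$ vanishes whenever $u_i=0$, so their derivatives along the coordinate directions $e_{m+1},\dots,e_n$ tangent to $U_s$ must vanish at $u^0(x)$; and the lower-right block is $M(x,u^0)$ because (A3) kills $\mathcal F_i$ for $i>m$. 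With this structure in hand, the linearized elliptic operator with Neumann boundary conditions has compact resolvent and its spectrum decomposes as $\sigma(B+F(x,u^0))\cup\sigma(d_S\Delta+M(x,u^0))$. By (A5) the second piece has negative spectral bound, and by part (i) with $R_0<1$ so does the first. The principle of linearized stability for semilinear parabolic systems then delivers local asymptotic stability of $u^0$ in $C(\cOm,\R^n)$.

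The delicate step is the verification of the vanishing upper-right block of $J(x)$, which rests on (A1), (A2), (A4) being used in combination to rule out first-order contributions from $\mathcal F$, $\mathcal V^+$ and $\mathcal V^-$ in the susceptible directions. Everything else in the argument is bookkeeping or direct citation: Thieme's abstract equivalence for (i), and standard linearized-stability theory for reaction-diffusion systems for (ii).
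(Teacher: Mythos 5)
Your proposal is correct and follows essentially the same route as the paper: the paper likewise verifies that $B$ is resolvent-positive with $s(B)<0$ via the comparison principle and (A6), notes that $F(x,u^0(x))$ acts as a positive operator so that $B+F(x,u^0(x))$ is resolvent-positive, and then obtains the proposition by citing \cite[Section 3]{WangZhao} (equivalently \cite[Theorem 3.5]{Thieme2009}). Your additional sketch of part (ii) via the block lower-triangular Jacobian and linearized stability is exactly the argument carried out in the cited source, so nothing is missing.
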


Next we recall  several results which will be used later.
First we have the following the comparison principle.
\begin{lemma}\label{compa}
Assume that $P_i(x)$ $(i=1,2)$ are $m\times m$ cooperative matrices for any $x\in\overline\Omega$, all entries of $P_i(x)$ $(i=1,2)$ are continuous, and $P_1(x)\ge P_2(x)$. Let $T_i(t)$ be the solution semigroup on $X$ (defined in Eq. \eqref{X}) associated with the following linear reaction-diffusion system:
\begin{equation}
\begin{cases}
\ds\f{\partial u_I}{\partial t}=d_I\Delta u_I+P_i(x)u_I,&x\in\Omega,\;t>0,\\
\ds\f{\partial u_I}{\partial \nu}=0,&x\in\partial \Omega,\;t>0,
\end{cases}
\end{equation}
where $d_I\Delta u_I$ is defined as in \eqref{dsdi}, and $d_i>0$ for $i=1,\dots,m$.
Then $T_1(t)\phi\ge T_2(t)\phi$ for any $\phi\in X_+$ and $t>0$.
\end{lemma}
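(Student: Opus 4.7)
The strategy is to combine the positivity of each semigroup $T_i(t)$ with a Duhamel (variation of constants) representation for the difference $T_1(t)\phi - T_2(t)\phi$. Essentially the same machinery invoked in the paper for the operator $B = d_I\Delta - V(x,u^0(x))$ via \cite[Theorem 3.12]{Thieme2009} applies here, since cooperativeness of the zeroth-order coefficient is the only structural property needed.

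First, I would establish that each $T_i(t)$ is positive on $X$, i.e.\ $T_i(t)X_+\subset X_+$. Because $P_i(x)$ is cooperative and continuous on $\overline\Omega$, one can pick a constant $k>0$ large enough that $P_i(x)+kI$ has nonnegative entries for every $x\in\overline\Omega$; the shifted operator $d_I\Delta+P_i(x)+kI$ then satisfies the maximum principle for weakly coupled parabolic systems with Neumann boundary conditions, yielding positivity, and multiplying by $e^{-kt}$ transfers this positivity to $T_i(t)$. This is exactly the mechanism underlying the cited resolvent-positivity of $B$. Next, for $\phi\in X_+$ set $u_i(t):=T_i(t)\phi$, so that $u_1(t)\in X_+$ by Step~1. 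The difference $w(t):=u_1(t)-u_2(t)$ satisfies
$$
\begin{cases}
\ds\f{\partial w}{\partial t}=d_I\Delta w+P_2(x)w+\bigl(P_1(x)-P_2(x)\bigr)u_1(t),& x\in\Omega,\;t>0,\\
\ds\f{\partial w}{\partial \nu}=0,& x\in\partial\Omega,\;t>0,\\
w(0)=0,
\end{cases}
$$
and the forcing term $g(t):=\bigl(P_1(x)-P_2(x)\bigr)u_1(t)$ lies in $X_+$, since $P_1(x)-P_2(x)\ge 0$ entrywise and $u_1(t)\ge 0$.

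Finally, by the Duhamel formula applied to the semigroup $T_2(t)$ generated by $d_I\Delta+P_2(x)$,
$$
w(t)=\int_0^t T_2(t-s)\,g(s)\,ds,
$$
and every integrand $T_2(t-s)g(s)$ belongs to $X_+$ by the positivity of $T_2$. Consequently $w(t)\in X_+$, which is precisely the claim $T_1(t)\phi\ge T_2(t)\phi$ for all $t>0$. The only delicate point in the argument is Step~1, the positivity of each $T_i(t)$ for continuous-in-$x$ cooperative coefficient matrices under Neumann boundary conditions; but this is classical and is already implicitly used in the paper when defining $B^{-1}\phi=\int_0^\infty T(t)\phi\,dt$, so the remainder of the proof reduces to a direct Duhamel computation.
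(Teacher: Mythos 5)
Your proposal is correct and follows essentially the same route as the paper: both arguments first establish $T_i(t)X_+\subset X_+$ from cooperativity, then write $w=u_1-u_2$ as the solution of the $P_2$-system with the nonnegative forcing $(P_1(x)-P_2(x))u_1$ and zero initial data, and conclude $w\ge 0$. The only cosmetic difference is that the paper invokes the comparison principle for inhomogeneous cooperative parabolic systems directly at the last step, whereas you make that step explicit via the Duhamel representation $w(t)=\int_0^t T_2(t-s)g(s)\,ds$.
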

\begin{proof}
Denote $U_i(x,t)=T_i(t)\phi$ for $\phi\in X_+$, and it follows from the comparison principle of cooperative parabolic systems that
$U_i(x,t)\ge0$ for any $(x,t)\in\overline\Omega\times (0,\infty)$ and $i=1,2$. Let $W(x,t)=U_1(x,t)-U_2(x,t)$, and then $W(x,t)$ satisfies
\begin{equation}
\begin{cases}
\ds\f{\partial W}{\partial t}=d_I\Delta W+P_2(x) W+\left(P_1(x)-P_2(x)\right)U_1,&x\in\Omega,\;t>0, \\
\ds\f{\partial W}{\partial \nu}=0, &x\in\partial\Omega,\;t>0,\\
W(x,0)=0,&x\in\Omega.
\end{cases}
\end{equation}
Note that $P_1(x)\ge P_2(x)$ and $U_1(x,t)\ge0$ for any $(x,t)\in\overline\Omega\times (0,\infty)$. Again it follows from the comparison principle of cooperative parabolic systems that
$W(x,t)\ge0$ for any $(x,t)\in\overline\Omega\times (0,\infty)$. This completes the proof.
\end{proof}

Secondly we recall the Krein-Rutmann theorem, (see \cite[Theorems 3.1 and 3.2]{Amann} or \cite[Theorem 2.5]{MagalWu}).
\begin{lemma}\label{kr}
\begin{enumerate}
\item [$(i)$] Suppose that $T:X\to X$ is a positive compact linear operator with positive spectral radius $r(T)$. Then $r(T)$ is an eigenvalue of $T$
with an eigenvector in $X_+\setminus\{0\}$.
\item [$(ii)$] Suppose that $T:X\to X$ is a strongly positive compact linear operator. Then $r(T)$ is positive and is a simple eigenvalue of $T$ with an eigenvector in  $Int (X_+)$, and there is no other eigenvalue with non-negative eigenvector. Moreover, if $S:X\to X$ is a linear operator such that $S-T$ is strongly positive, then $r(S)>r(T)$.
\end{enumerate}
\end{lemma}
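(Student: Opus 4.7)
The plan is to establish part (i) through a resolvent-type compactness argument, and then leverage strong positivity together with duality to upgrade to part (ii). For part (i), since $r=r(T)>0$ belongs to the spectrum of the compact operator $T$, the resolvent $R(\la,T)=(\la I-T)^{-1}=\sum_{k\ge 0}\la^{-k-1}T^k$ is a positive operator for $\la>r$, and $\|R(\la,T)\|\to\infty$ as $\la\downarrow r$. Pick a sequence $\la_n\downarrow r$ and $\phi_n\in X_+$ of unit norm with $c_n:=\|R(\la_n,T)\phi_n\|\to\infty$, and set $\psi_n:=c_n^{-1}R(\la_n,T)\phi_n\in X_+$. The identity $\la_n\psi_n-T\psi_n=\phi_n/c_n\to 0$, combined with the compactness of $T$, produces along a subsequence a limit $\psi\in X_+$ with $\|\psi\|=1$ and $T\psi=r\psi$.

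For part (ii), first $r(T)>0$: pick any $\chi\in\text{Int}(X_+)$; then $T\chi\in\text{Int}(X_+)$, so $T\chi\ge c\chi$ for some $c>0$ and hence $r(T)\ge c$. The eigenvector $\phi$ from (i) lies in $\text{Int}(X_+)$, since $r\phi=T\phi\in\text{Int}(X_+)$ by strong positivity. A parallel argument on the dual side (using that the adjoint $T^*$ on the space of Radon measures is compact and, in an appropriate sense, strongly positive) yields a strictly positive eigenfunctional $\phi^*$ with $T^*\phi^*=r\phi^*$. If $\mu$ is any eigenvalue of $T$ with non-negative eigenvector $\psi\ne 0$, pairing gives $\mu\langle\phi^*,\psi\rangle=\langle T^*\phi^*,\psi\rangle=r\langle\phi^*,\psi\rangle$, and $\langle\phi^*,\psi\rangle>0$ forces $\mu=r$. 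For simplicity, if $\phi'$ is a second eigenvector at $r$, set $\alpha^*=\sup\{\alpha\ge 0:\phi-\alpha\phi'\in X_+\}$; then $\phi-\alpha^*\phi'\in\partial X_+$, but the eigenvalue relation together with strong positivity forces $r(\phi-\alpha^*\phi')=T(\phi-\alpha^*\phi')\in\text{Int}(X_+)$ unless $\phi-\alpha^*\phi'=0$, yielding proportionality.

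For the comparison $r(S)>r(T)$, apply (i)-(ii) to $S$ (which is strongly positive because $S=T+(S-T)$ with $T$ positive and $S-T$ strongly positive) to obtain $\psi_S\in\text{Int}(X_+)$ with $S\psi_S=r(S)\psi_S$. Then
\begin{equation*}
r(S)\langle\phi^*,\psi_S\rangle=\langle\phi^*,T\psi_S\rangle+\langle\phi^*,(S-T)\psi_S\rangle=r(T)\langle\phi^*,\psi_S\rangle+\langle\phi^*,(S-T)\psi_S\rangle,
\end{equation*}
and the second summand is strictly positive since $(S-T)\psi_S\in\text{Int}(X_+)$ while $\phi^*$ is strictly positive, giving $r(S)>r(T)$. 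The hardest step will be part (i): extracting a positive eigenvector from a merely positive compact operator is delicate, because a naive Schauder fixed-point argument on the unit sphere of the cone can fail when $T\phi=0$ is allowed for some $\phi\in X_+\setminus\{0\}$. If the resolvent route proves awkward, an alternative is to perturb via $T_\ep:=T+\ep P$ with $P$ a rank-one strongly positive operator so that part (ii) applies to $T_\ep$, then pass to the limit $\ep\to 0$ using compactness of $T$ to extract an eigenvector for $T$ in $X_+\setminus\{0\}$ at eigenvalue $r(T)$.
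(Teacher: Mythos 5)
The paper offers no proof of this lemma: it is recalled verbatim from Amann (Theorems 3.1 and 3.2) and Magal--Webb--Wu, so any comparison is against the classical proof you are reconstructing. Your argument for (i) is the standard resolvent blow-up proof and is essentially correct, though you assert without justification that $r(T)$ itself (not merely some spectral point on the circle $|\la|=r(T)$) lies in $\sigma(T)$; this is where positivity enters, via the estimate $\|(\la I-T)^{-1}\|\le \|(|\la| I-T)^{-1}\|$ for $|\la|>r(T)$, which shows the resolvent cannot stay bounded as $\la\downarrow r(T)$ unless the whole circle is resolvent, contradicting the definition of the spectral radius. You also need the (true, but worth one line) remark that $\sup\{\|R(\la,T)\phi\|:\phi\in X_+,\ \|\phi\|\le 1\}$ controls $\|R(\la,T)\|$, using $\phi=\phi^+-\phi^-$. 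In (ii), the phrase ``$T^*$ is in an appropriate sense strongly positive'' is misleading: the dual cone of positive measures has empty norm-interior, so strong positivity is not available there; what you actually need, and what your own computation supplies, is only part (i) applied to $T^*$ together with the identity $r\langle\phi^*,\psi\rangle=\langle\phi^*,T\psi\rangle$ and $T\psi\in \mathrm{Int}(X_+)$ to conclude $\phi^*$ is strictly positive. Your sliding argument gives geometric simplicity of $r(T)$ over the reals, which is what the lemma is used for here.

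The one genuine gap is in the final comparison. You apply part (ii) to $S$ to produce an eigenvector $\psi_S\in\mathrm{Int}(X_+)$ with $S\psi_S=r(S)\psi_S$, but the hypothesis only says $S$ is a bounded linear operator with $S-T$ strongly positive; $S$ is not assumed compact, so it need not have any eigenvector at $r(S)$, and part (ii) does not apply to it. The step is easily repaired without duality: with $\phi\in\mathrm{Int}(X_+)$ the eigenvector of $T$, strong positivity of $S-T$ gives $(S-T)\phi\ge\ep\phi$ for some $\ep>0$, hence $S\phi\ge (r(T)+\ep)\phi$; since $S$ is positive, iterating yields $S^n\phi\ge (r(T)+\ep)^n\phi$, so $\|S^n\|\ge (r(T)+\ep)^n$ and $r(S)=\lim\|S^n\|^{1/n}\ge r(T)+\ep>r(T)$. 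With that substitution (and the two small patches above) your proof is a complete and standard derivation of the cited result.
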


Based on the Krein-Rutmann theorem in Lemma \ref{kr}, we have the following two results.
\begin{lemma}\label{krein}
Let $L_1$ and $L_2$ be bounded linear operators on $X$ (defined in Eq. \eqref{X}).
Assume that $L_1\phi\ge L_2\phi$ for any $\phi\in X_+$, and $L_2$ is a positive compact operator with positive spectral radius $r(L_2)$. Then $r(L_1)\ge r(L_2)$.
\end{lemma}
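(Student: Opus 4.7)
The plan is to combine the Krein-Rutman theorem applied to $L_2$ with an iteration argument and Gelfand's spectral radius formula applied to $L_1$. The whole point is that once we know $L_2$ has a positive eigenvector at its spectral radius, the domination $L_1 \ge L_2$ on $X_+$ should transfer to a lower bound on the growth of $\|L_1^n\|$.

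First I would invoke Lemma \ref{kr}(i): since $L_2$ is a positive compact linear operator on $X$ with $r(L_2)>0$, there exists $\phi \in X_+ \setminus \{0\}$ with $L_2\phi = r(L_2)\phi$. Next I would verify that $L_1$ is itself positive on $X_+$: for any $\psi \in X_+$, $L_1\psi \ge L_2\psi \ge 0$ by hypothesis and by positivity of $L_2$. In particular, for any $\psi_1, \psi_2 \in X$ with $\psi_1 - \psi_2 \in X_+$, we get $L_1\psi_1 - L_1\psi_2 = L_1(\psi_1 - \psi_2) \in X_+$, so $L_1$ preserves the partial order restricted to pairs whose difference lies in $X_+$.

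The core step is an induction showing
\begin{equation*}
L_1^n \phi \ge r(L_2)^n \phi \qquad \text{for every } n \ge 1.
\end{equation*}
The base case $n=1$ is immediate from $L_1\phi \ge L_2\phi = r(L_2)\phi$. For the inductive step, assume $L_1^n\phi - r(L_2)^n\phi \in X_+$. Applying the order-preserving operator $L_1$ gives $L_1^{n+1}\phi \ge r(L_2)^n L_1\phi \ge r(L_2)^n \cdot r(L_2)\phi = r(L_2)^{n+1}\phi$.

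To conclude, I would use the monotonicity of the sup-norm on $C(\overline\Omega,\mathbb{R}^m)$ with respect to the cone $X_+$: if $\psi \ge \chi \ge 0$ componentwise and pointwise, then $\|\psi\| \ge \|\chi\|$. Hence $\|L_1^n\phi\| \ge r(L_2)^n \|\phi\|$, and since $\phi \ne 0$ we obtain $\|L_1^n\| \ge r(L_2)^n$. Gelfand's formula then yields
\begin{equation*}
r(L_1) = \lim_{n\to\infty} \|L_1^n\|^{1/n} \ge r(L_2),
\end{equation*}
finishing the proof. I do not anticipate a serious obstacle here; the only delicate point is making sure the order preservation of $L_1$ is invoked only on differences that already lie in $X_+$ (which is exactly what the induction produces), and recording that monotonicity of the sup-norm on the positive cone is what converts a pointwise inequality into a norm inequality.
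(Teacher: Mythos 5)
Your proposal is correct and follows essentially the same route as the paper: Krein--Rutman applied to $L_2$ to extract a positive eigenvector $\phi$, the iterated inequality $L_1^n\phi\ge r(L_2)^n\phi$, and Gelfand's formula. You simply make explicit the steps the paper leaves implicit (the induction, the positivity of $L_1$, and the normality of the cone $X_+$ under the sup-norm), all of which are verified correctly.
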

\begin{proof}
It follows from Lemma \ref{kr} that $r(L_2)$ is an eigenvalue of $L_2$, and there exists $\phi\in X_+\setminus\{0\}$ such that $\|\phi\|_\infty=1$ and $L_2\phi=r(L_2)\phi$.
Then $L_1^n\phi \ge r^n(L_2)\phi$, which implies that $\|L_1^n\|\ge r^n(L_2)$.
Therefore,
$r(L_1)=\ds\lim_{n\to\infty} \|L_1^n\|^{1/n}\ge r(L_2)$.
\end{proof}
Consider the following eigenvalue problem:
\begin{equation}\label{auei}
\begin{cases}
d_I\Delta \Phi-P(x)\Phi+aQ(x)\Phi=\la\Phi,& x\in\Omega,\\
\ds\f{\partial \Phi}{\partial \nu}=0,&x\in\partial \Omega,
\end{cases}
\end{equation}
where
\begin{equation}
\Phi=(\phi_1,\dots,\phi_m)^T,\;\;d_I\Delta\Phi=(d_1\Delta\phi_1,\dots,d_m\Delta\phi_m)^T,
\end{equation}
$a>0$, $d_i>0$ for $i=1,\dots,m$, and $P(x)=\left(P_{ij}(x)\right)_{1\le i,j\le m}$ and $Q(x)=\left(Q_{ij}(x)\right)_{1\le i,j\le m}$ are $m\times m$ matrices with continuous entries.
Recall that an eigenvalue $\la$ of \eqref{auei} is called the principal eigenvalue if $\la\in\mathbb R$ and for
any eigenvalue such that $\tilde \la \ne \la$, we have $\mathcal {R}e \tilde \la <\la$.
\begin{lemma}\label{monot}
Assume that
$-P(x)$ is cooperative, $Q(x)$ is positive for any $x\in\overline\Omega$, and for any $a\in(0,\infty)$, there exists $x_a\in\Omega$ such that $-P(x_a)+aQ(x_a)$ is irreducible. Let $\lambda(a)$ be the principal eigenvalue of \eqref{auei}. Then $\lambda(a)$ is strictly increasing for $a\in (0,\infty)$.
\end{lemma}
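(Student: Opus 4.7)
The plan is to recast strict monotonicity of $\lambda(a)$ as a strict comparison of spectral radii of resolvent operators, so that Lemma~\ref{kr}(ii) can be applied. For fixed $0<a_1<a_2$, define $A_a\Phi := d_I\Delta\Phi-P(x)\Phi+aQ(x)\Phi$ on $X$ with Neumann boundary conditions, so that $\lambda(a)=s(A_a)$. Choose $\mu>\max\{\lambda(a_1),\lambda(a_2)\}$ and set $R_a:=(\mu I-A_a)^{-1}$. Since $r(R_a)=(\mu-\lambda(a))^{-1}$, the desired inequality $\lambda(a_1)<\lambda(a_2)$ is equivalent to $r(R_{a_1})<r(R_{a_2})$.

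First I would check the basic positivity properties. Because $-P(x)$ is cooperative and $aQ(x)\ge 0$, the matrix $-P(x)+aQ(x)$ is cooperative for every $a>0$, so $A_a$ generates a positive semigroup $T_a(t)$ on $X$; standard resolvent-positivity then makes $R_a$ a positive compact operator for $\mu$ large enough. The resolvent identity gives
\begin{equation*}
R_{a_2}-R_{a_1}=R_{a_2}\bigl((\mu I-A_{a_1})-(\mu I-A_{a_2})\bigr)R_{a_1}=(a_2-a_1)\,R_{a_2}\,Q\,R_{a_1},
\end{equation*}
which is automatically a positive operator. To invoke Lemma~\ref{kr}(ii) and obtain the strict inequality $r(R_{a_2})>r(R_{a_1})$, I must upgrade this to strong positivity.

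For this, the hypothesis that $-P(x_a)+aQ(x_a)$ is irreducible at some $x_a\in\Omega$, combined with the strictly positive diffusion coefficients $d_1,\dots,d_m$ and the connectedness of $\Omega$, should yield via the parabolic strong maximum principle for cooperative systems that $T_a(t)$ is strongly positive for every $t>0$; integrating $e^{-\mu t}T_a(t)$ over $t\in(0,\infty)$ then makes $R_a$ strongly positive. Given $\phi\in X_+\setminus\{0\}$, strong positivity of $R_{a_1}$ places every component of $R_{a_1}\phi$ strictly inside $\mathrm{Int}(X_+)$; since $Q(x_{a_1})$ has at least one positive entry, the map $Q R_{a_1}\phi$ is a nonzero element of $X_+$; strong positivity of $R_{a_2}$ then sends it into $\mathrm{Int}(X_+)$. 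Hence $R_{a_2}-R_{a_1}$ is strongly positive, and Lemma~\ref{kr}(ii) gives $r(R_{a_2})>r(R_{a_1})$, which translates to $\lambda(a_2)>\lambda(a_1)$.

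The main obstacle is the strong-positivity claim for the semigroup. The irreducibility is assumed only at a single point $x_a$, not uniformly on $\overline\Omega$, so one must combine a parabolic Harnack/strong-maximum-principle argument --- which spreads positivity of each individual component across $\overline\Omega$ using $d_i\Delta$ --- with the local coupling supplied by the off-diagonal entries of $-P(x)+aQ(x)$ in a neighborhood of $x_a$ to conclude that every component is strictly positive everywhere on $\overline\Omega$ after any $t>0$. Once this is justified (it is essentially the argument that yields a simple, positive principal eigenfunction in Lemma~\ref{kr}), the strict monotonicity of $\lambda(a)$ follows cleanly from the Krein--Rutman step outlined above.
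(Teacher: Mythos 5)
Your argument is correct and lands on the same structural skeleton as the paper's proof --- compare two positive compact operators attached to $a_1<a_2$, show their difference is strongly positive, and apply Lemma \ref{kr}(ii) --- but the implementation is genuinely different. The paper works with the semigroups: it cites \cite[Theorem 7.4.1]{Smith1995} for strong positivity and compactness of $T^{a}(t)$, sets $W=T^{a_1}(t)\Phi-T^{a_2}(t)\Phi$, and proves $W>0$ by a parabolic comparison/Duhamel argument in which the source term $(a_1-a_2)Q(x)U_1$ makes one component of $W$ positive and the pointwise irreducibility of $-P(x_{a_2})+a_2Q(x_{a_2})$ propagates positivity to the remaining components; the conclusion then follows from $r\left(T^{a}(t)\right)=e^{\lambda(a)t}$. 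You instead compare resolvents, and the resolvent identity $R_{a_2}-R_{a_1}=(a_2-a_1)R_{a_2}QR_{a_1}$ turns the strong-positivity-of-the-difference step into a clean composition argument (strongly positive operator, followed by a positive nonzero multiplication operator, followed by a strongly positive operator), avoiding the PDE comparison entirely. The price is that you need $r(R_a)=(\mu-\lambda(a))^{-1}$, which holds because $A_a$ is resolvent positive so that $s(A_a)\in\sigma(A_a)$ and the positive operator $R_a$ attains its spectral radius at the image of $s(A_a)$; this deserves the one-line justification, whereas the paper only uses the elementary exponential identity. Both proofs ultimately rest on the same nontrivial ingredient, which you correctly isolate as the main obstacle: strong positivity of $T_a(t)$ (equivalently of $R_a$) when irreducibility of the cooperative matrix is assumed only at a single point $x_a$. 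Your sketch --- the scalar strong maximum principle spreads each already-positive component over all of $\overline\Omega$, the off-diagonal coupling near $x_a$ ignites new components, and one iterates using irreducibility --- is precisely the propagation argument the paper carries out explicitly for $W$, so what remains in your write-up is a matter of detail rather than of substance.
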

\begin{proof}
Since $-P(x)+aQ(x)$ is cooperative for any $x\in\overline\Omega$ and $a>0$, it follows from Lemma \ref{kr} that $\la(a)$ is well defined and
$$\la(a)=\sup\{\mathcal R e\la:\la\text{ is an eigenvalue of problem } \eqref{auei}\}.$$
Let $T^a(t)$ be the solution semigroup associated with the linear parabolic system
\begin{equation}
\begin{cases}
\ds\f{\partial V}{\partial t}=d_I\Delta V-P(x) V+aQ(x)V,&t>0,\; x\in\Omega, \\
\ds\f{\partial V}{\partial \nu}=0, &t>0,\; x\in\partial\Omega,\\
V(x,0)=V_0(x),&x\in\Omega.
\end{cases}
\end{equation}
Then it follows from \cite[Theorem 7.4.1]{Smith1995} that $T^a(t)$ is strongly positive and compact for any $a>0$ and $t>0$. Let $a_1>a_2$, $\Phi\in X_+\setminus\{0\}$, and
\begin{equation}
\begin{split}
U_1(x,t)=\left(U_1^{(1)}(x,t),\dots,U_1^{(m)}(x,t)\right)^T=T^{a_1}(t)\Phi,\\
U_2(x,t)=\left(U_2^{(1)}(x,t),\dots,U_2^{(m)}(x,t)\right)^T=T^{a_2}(t)\Phi.\\
\end{split}
\end{equation}
Then $U_1(x,t),U_2(x,t)>0$ for any $x\in\overline\Omega$ and $t>0$.
It follows from Lemma \ref{compa} that $U_1(x,t)\ge U_2(x,t)$ for any $x\in\overline\Omega$ and $t>0$.
Let $W(x,t)=U_1(x,t)-U_2(x,t)$, and we see that $U(x,t)$ satisfies
\begin{equation}
\begin{cases}
\ds\f{\partial W}{\partial t}=d_I\Delta W-P(x) W+a_2Q(x)W+(a_1-a_2)Q(x)U_1,&t>0,\;x\in\Omega, \\
\ds\f{\partial W}{\partial \nu}=0, &t>0,\;x\in\partial\Omega,\\
W(x,0)=0,&x\in\Omega.\\
\end{cases}
\end{equation}
Note that $Q(x)$ is positive for any $x\in\overline\Omega$, and $U_1(x,t)>0$ for any $x\in\overline\Omega$ and $t>0$. Then there exist $1\le i_1\le n$ and $x_0\in\Omega$ such that
$\ds\sum_{j=1}^n Q_{i_1j}(x_0)U^{(j)}_1(x_0,t)>0$ for any $t>0$, and consequently $W_{i_1}(x,t)>0$ for any $x\in\overline\Omega$ and $t>0$. Note that there exists
$x_{a_2}\in\Omega$ such that
$-P(x_{a_2}) +a_2Q(x_{a_2})$ is irreducible. Then there exists $i_2\ne i_1$ such that $-P_{i_2i_1}(x_{a_2})+a_2Q_{i_2i_1}(x_{a_2})>0$, which implies
that $W_{i_2}(x,t)>0$ for any $x\in\overline\Omega$ and $t>0$. Following the above process, we could obtain that
$W(x,t)>0$ for any $x\in\overline\Omega$ and $t>0$, which implies that $T_{a_1}(t)-T_{a_2}(t)$ is strongly positive for any $t>0$.
It follows from Lemma \ref{kr} that $$r\left(T_{a_1}(t)\right)=e^{\la(a_1)t}>r\left(T_{a_2}(t)\right)=e^{\la(a_2)t}\;\;\text{for any}\;\; t>0,$$
which implies that $\la(a_1)>\la(a_2)$. This completes the proof.
\end{proof}

\section{The effect of diffusion rates}

In this section, we show the asymptotic profile of $R_0$ for model \eqref{main} when all the diffusion rates are large or small.

\subsection{Small diffusion rates}
In this subsection, we consider the asymptotic profile of $R_0$ when $(d_1,\dots,d_n)\to(0,\dots,0)$. We first impose an additional assumption for this case:
\begin{enumerate}
\item [(A7)]  The disease-free steady state $(0,\dots,0,u_{m+1}^0(x),\dots,u_n^0(x))$ (defined in Eq. \eqref{equi}) satisfies
\begin{equation}\lim_{(d_{m+1},\dots,d_n)\to (0,\dots,0)}(u_{m+1}^0(x),\dots,u_n^0(x))=( c_{m+1}(x),\dots, c_n(x)) \;\;\text{in}\;\; C\left(\overline\Omega,\mathbb{R}^{n-m}\right),
\end{equation}
where $c_k(x)>0$ for any $x\in\overline\Omega$ and $k=m+1,\dots,n$.
\end{enumerate}
In the next section, we will show that this assumption is not restrictive, and it is satisfied for many kinds of epidemic models.
Denote
\begin{equation}\label{cx}
c(x)=(0,\dots,0,c_{m+1}(x),\dots, c_n(x))\in C\left(\overline\Omega,\mathbb{R}^n\right),
\end{equation}
and denote, for sufficiently small $\epsilon$ $(0<\epsilon<\min\{c_i(x):i=m+1,\dots,n,x\in\overline\Omega\})$,
\begin{equation}\label{lowupper2}
\begin{split}
&\mathcal D_\epsilon^c=\{(x,u_1,\dots,u_n):x\in\overline \Omega,\;u_i=0\text{ for }i=1,\dots,m, \\
&~~~~~~~~~~u_i\in[c_i(x)-\epsilon,c_i(x)+\epsilon] \text{ for }i=m+1,\dots,n\},\\
&\underline V^c_\epsilon=\left(\min_{(x,u)\in\mathcal D_\epsilon^c}V_{ij}(x, u)\right)_{1\le i,j\le m}=\left(\min_{(x,u)\in\mathcal D_\epsilon^c}\ds\f{\partial \mathcal V_i(x, u)}{\partial u_j}\right)_{1\le i,j\le m},\\
&\overline V^c_\epsilon=\left(\max_{(x,u)\in\mathcal D_\epsilon^c}V_{ij}(x, u)\right)_{1\le i,j\le m}=\left(\max_{(x,u)\in\mathcal D_\epsilon^c}\ds\f{\partial \mathcal V_i(x, u)}{\partial u_j}\right)_{1\le i,j\le m},\\
&\underline F^c_\epsilon=\left(\min_{(x,u)\in\mathcal D_\epsilon^c}F_{ij}(x, u)\right)_{1\le i,j\le m}=\left(\min_{(x,u)\in\mathcal D_\epsilon^c}\ds\f{\partial \mathcal F_i(x, u)}{\partial u_j}\right)_{1\le i,j\le m},\\
&\overline F^c_\epsilon=\left(\max_{(x,u)\in\mathcal D_\epsilon^c}F_{ij}(x, u)\right)_{1\le i,j\le m}=\left(\max_{(x,u)\in\mathcal D_\epsilon^c}\ds\f{\partial \mathcal F_i(x, u)}{\partial u_j}\right)_{1\le i,j\le m}.
\end{split}
\end{equation}
Since $\mathcal D^c_{\epsilon_1}\subset \mathcal D^c_{\epsilon_2}$ for $0\le\epsilon_1<\epsilon_2$, it follows that $\overline F^c_\epsilon$ and
$\overline V^c_\epsilon$ are monotone decreasing for $\epsilon\ge0$, and  $\underline F^c_\epsilon$ and
$\underline V^c_\epsilon$ are monotone increasing for $\epsilon\ge0$. We will show that these functions  $\overline F^c_\epsilon$,
$\overline V^c_\epsilon$, $\underline F^c_\epsilon$ and
$\underline V^c_\epsilon$ are also continuous for $\epsilon\ge0$ in the Appendix.

Clearly, for $\epsilon=0$, we have
\begin{equation*}
\begin{split}
&\underline V_0^c=\left(\min_{x\in\overline\Omega}V_{ij}(x,c(x))\right)_{1\le i,j\le m}=\left(\min_{x\in\overline\Omega}\ds\f{\partial \mathcal V_i(x,c(x))}{\partial u_j}\right)_{1\le i,j\le m},\\
&\overline V_0^c=\left(\max_{x\in\overline\Omega}V_{ij}(x,c(x))\right)_{1\le i,j\le m}=\left(\max_{x\in\overline\Omega}\ds\f{\partial \mathcal V_i(x,c(x))}{\partial u_j}\right)_{1\le i,j\le m},\\
&\underline F_0^c=\left(\min_{x\in\overline\Omega}F_{ij}(x,c(x))\right)_{1\le i,j\le m}=\left(\min_{x\in\overline\Omega}\ds\f{\partial \mathcal F_i(x,c(x))}{\partial u_j}\right)_{1\le i,j\le m},\\
&\overline F_0^c=\left(\max_{x\in\overline\Omega}F_{ij}(x,c(x))\right)_{1\le i,j\le m}=\left(\max_{x\in\overline\Omega}\ds\f{\partial \mathcal F_i(x,c(x))}{\partial u_j}\right)_{1\le i,j\le m}.
\end{split}
\end{equation*}
Now we show the asymptotic profile of $R_0$ as $(d_1,\dots,d_n)\to (0,\dots,0)$, and the method is motivated by the one in \cite{MagalWu}.
\begin{theorem}\label{bounds2}
Assume that (A1)-(A5) and (A7) hold, $$s\left(\overline V_0^c\right)<0, \; s\left(-\underline V_0^c\right)<0\;\;\text{and}\;\; r((\overline V_0^c)^{-1}\underline F_0^c)>0,$$
and there exists $\epsilon_0>0$ such that, for any $x\in\overline\Omega$, $-\overline V^c_{\epsilon_0}$ is cooperative and $\underline F^c_{\epsilon_0}$ is positive, where
$\underline V_\epsilon^c$, $\overline V_\epsilon^c$ and $\underline F_{\epsilon}^c$ are defined in \eqref{lowupper2}. If the matrix $-V(x,c(x))+aF(x,c(x))$ is irreducible for any $a>0$ and $x\in\overline \Omega$, where $c(x)$ is defined in \eqref{cx}, 
then
$$\ds\lim_{(d_1,\dots,d_n)\to(0,\dots,0)}R_0= R_0^c:=\max_{x\in\overline\Omega}\left[r\left(-V^{-1}(x,c(x))F(x,c(x))\right)\right].$$
\end{theorem}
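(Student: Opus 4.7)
The idea is to reduce the convergence of $R_0$ to the convergence of a parametric principal eigenvalue. For $a>0$, define
\[
\mu(d_I,a):=s\bigl(d_I\Delta-V(x,u^0(x))+aF(x,u^0(x))\bigr)
\]
on $\Omega$ with Neumann boundary conditions. By Proposition 2.1(i) applied after the rescaling $\mathcal F\mapsto a\mathcal F$ (which multiplies the basic reproduction number by $a$), the value $a=1/R_0$ is characterized by $\mu(d_I,1/R_0)=0$; Lemma 2.5 gives strict monotonicity of $a\mapsto\mu(d_I,a)$, with the required irreducibility following from the theorem's irreducibility hypothesis on $-V(x,c(x))+aF(x,c(x))$ combined with (A7). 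Analogously set
\[
\mu_0(a):=\max_{x\in\overline\Omega}s\bigl(-V(x,c(x))+aF(x,c(x))\bigr).
\]
By pointwise Perron--Frobenius applied to the cooperative family, $s(-V(x,c)+aF(x,c))=0$ iff $a=1/r(V^{-1}(x,c)F(x,c))$, so $\mu_0(a)=0$ iff $a=1/R_0^c$, and $\mu_0$ is continuous and strictly increasing. Hence it suffices to prove $\mu(d_I,a)\to\mu_0(a)$ for each fixed $a>0$; inversion then yields $R_0\to R_0^c$.

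For the lower bound $\liminf\mu(d_I,a)\geq\mu_0(a)$, I would localize around a maximizer $x^*\in\overline\Omega$ of $s(-V(\cdot,c)+aF(\cdot,c))$. On a small ball $B_r(x^*)\cap\Omega$, the Neumann principal eigenvalue on $\Omega$ dominates the Dirichlet principal eigenvalue of the same operator on $B_r(x^*)$ (extend Dirichlet eigenfunctions by zero and invoke the semigroup comparison of Lemma 2.2). By (A7) and continuity, on $B_r(x^*)$ the matrix $-V(x,u^0)+aF(x,u^0)$ is componentwise at least $A(x^*):=-V(x^*,c(x^*))+aF(x^*,c(x^*))$ up to an error $\omega(r,d_I)\to 0$ as $r,\max_i d_i\to 0$. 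A second application of Lemma 2.2 then bounds the Dirichlet principal eigenvalue below by $-\bigl(\max_i d_i\bigr)\mu_1(B_r)+s(A(x^*))-\omega(r,d_I)$, where $\mu_1(B_r)$ is the first Dirichlet eigenvalue of $-\Delta$ on $B_r$. Letting $d_I\to 0$ first and $r\to 0$ second yields the lower bound.

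For the upper bound $\limsup\mu(d_I,a)\leq\mu_0(a)$, I would use the super-solution characterization $s(L)\leq\lambda$ whenever there is $\phi>0$ with $L\phi\leq\lambda\phi$ componentwise, which follows by pairing with the positive adjoint Perron eigenfunction from Lemma 2.3. By the irreducibility of $A(x):=-V(x,c(x))+aF(x,c(x))$, Lemma 2.3 supplies at each $x$ a strongly positive simple Perron eigenvector $v(x)$ with $A(x)v(x)=s(A(x))v(x)$; simplicity and continuity of $A(\cdot)$ allow $v$ to be chosen continuous and uniformly bounded below on $\overline\Omega$. Smoothing $v$ to a $C^2$ positive $\tilde v$, one computes
\[
\bigl(d_I\Delta-V(x,u^0)+aF(x,u^0)\bigr)\tilde v=d_I\Delta\tilde v+A(x)\tilde v+o(1)
\]
componentwise as $d_I\to 0$, and $A(x)\tilde v\leq[\mu_0(a)+o(1)]\tilde v$. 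The super-solution principle then delivers $\mu(d_I,a)\leq\mu_0(a)+o(1)$.

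The main obstacle is the upper bound. The two technical hurdles are: (a) constructing a continuous, smoothly approximable positive Perron eigenvector $v(x)$ for the cooperative family $A(x)$, which leans on pointwise irreducibility and the simplicity clause of Lemma 2.3; and (b) absorbing the bounded-in-$L^\infty$ term $d_I\Delta\tilde v$, of size $O(\max_i d_i)$, into a componentwise inequality $d_I\Delta\tilde v+A(x)\tilde v\leq[\mu_0(a)+\eta]\tilde v$ for arbitrary $\eta>0$. The uniform positive lower bound on $\tilde v$ supplied by compactness of $\overline\Omega$ is what permits this absorption; the remaining standing hypotheses $s(\overline V_0^c)<0$, $s(-\underline V_0^c)<0$, and $r((\overline V_0^c)^{-1}\underline F_0^c)>0$ are used along the way to guarantee well-definedness and non-triviality of $R_0$ uniformly in small $d_I$.
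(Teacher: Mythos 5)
Your proposal is correct and, at the top level, follows the same architecture as the paper's proof: both characterize $1/R_0$ as the unique zero in $a$ of the principal eigenvalue of the cooperative system $d_I\Delta-V+aF$ (via Proposition 2.1 applied to $a\mathcal F$), both use the strict monotonicity in $a$ from Lemma \ref{monot} to invert, and both reduce the theorem to the convergence of that principal eigenvalue to $\max_{x\in\overline\Omega}s\left(-V(x,c(x))+aF(x,c(x))\right)$ as the diffusion rates vanish. Where you genuinely diverge is in how that convergence is established. The paper does not treat the operator with coefficient $u^0(x)$ directly (since $u^0$ itself moves with $d_{m+1},\dots,d_n$); it sandwiches $V(x,u^0(x))$ and $F(x,u^0(x))$ between the frozen matrices $\underline V^x_\epsilon,\overline V^x_\epsilon,\underline F^x_\epsilon,\overline F^x_\epsilon$ of \eqref{lowupper2x}, invokes \cite[Theorem 1.4]{LamLou} for the small-diffusion limit of the principal eigenvalue of those fixed-coefficient cooperative systems, and then squeezes in $\epsilon$ at the end (Step 3). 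You instead propose to reprove that limit from scratch: a lower bound by localization at a maximizer against a Dirichlet problem on a small ball, and an upper bound by smoothing a continuous Perron eigenvector field of $A(x)=-V(x,c(x))+aF(x,c(x))$ into a supersolution, absorbing both $d_I\Delta\tilde v$ and the (A7)-error $u^0\to c$ by the uniform positive lower bound on $\tilde v$. This is workable and self-contained, and it even dispenses with the paper's $\epsilon$-sandwich because the $u^0$-dependence enters only as a uniformly small matrix perturbation; the price is that you must essentially reproduce the content of the Lam--Lou theorem, being careful to fix the smoothing parameter and ball radius before sending $d_I\to 0$ (otherwise $\|\Delta\tilde v\|_\infty$ and the first Dirichlet eigenvalue $\mu_1(B_r)$ are uncontrolled), and to note that with unequal $d_1,\dots,d_m$ the product test function is only a sub- or supersolution up to a factor $\max_i d_i$, exactly as you indicate. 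The paper's route is shorter given the reference; yours is longer but more elementary, and the steps you leave implicit (continuity of the Perron eigenvector field from simplicity and irreducibility, the sub/supersolution characterizations of the spectral bound via semigroup comparison as in Lemmas \ref{compa} and \ref{krein}, the a priori bounds of Step 1 guaranteeing (A6) and $R_0\in[\underline R_0^c,\overline R_0^c]$, and the final inversion by a subsequence argument) are all standard and present in some form in the paper.
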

\begin{proof}
{\bf Step 1.} We show that there exist positive constants $\underline R_0^c$, $\overline R_0^c$ and $C_2$ such that
$R_0\in[\underline R_0^c,\overline R_0^c]$ for any $d_1,\dots,d_m>0$ and $d_{m+1},\dots,d_n\in(0,C_2)$.\\
Since $-\overline V^c_{\epsilon_0}$ is cooperative and $\underline F^c_{\epsilon_0}$ is positive for any $x\in\overline\Omega$,
it follows from the monotonicity of $\overline F^c_\epsilon$,
$\overline V^c_\epsilon$, $\underline F^c_\epsilon$ and
$\underline V^c_\epsilon$ that
$-\overline V^c_\epsilon$ and  $-\underline V^c_\epsilon$ are cooperative, and $\overline F^c_{\epsilon_0}$ and $\underline F^c_{\epsilon}$ are positive for any $\epsilon\in[0,\epsilon_0]$.
Note that $\underline V^c_\epsilon$ and $\overline V^c_\epsilon$ are continuous with respect to $\epsilon$ (see Proposition \ref{pro1}), and
$$\lim_{\epsilon\to0}\underline V^c_\epsilon=\underline V_0^c\;\;\text{and}\;\;\lim_{\epsilon\to0}\overline V^c_\epsilon=\overline V_0^c.$$
It follows from \cite[Theorem 2.5.1]{Kato} that there exists $\epsilon_1\in(0,\epsilon_0)$ such that
$$s(-\underline V^c_\epsilon)<0,\; s(-\overline V^c_\epsilon)<0 \;\;\text{for any}\;\; \epsilon\in (0,\epsilon_1].$$
Similarly, $\left(\overline V_\epsilon^{c}\right)^{-1} \underline F^c_\epsilon$ is continuous with respect to $\epsilon$ for $\epsilon\in(0,\epsilon_1)$, and
there exists $\epsilon_2\in(0,\epsilon_1)$ such that $r\left(\left(\overline V_\epsilon^{c}\right)^{-1} \underline F^c_\epsilon\right)>0$ for any $\epsilon\in(0,\epsilon_2]$. It follows from (A7) that, for the above given $\epsilon_2>0$, there exists $C_2>0$ such that
\begin{equation*}
c_i(x)-\epsilon_2\le u^0_i(x)\le c_i(x)+\epsilon_2
\end{equation*}
for any $x\in\overline \Omega$, $d_{m+1},\dots,d_n\in(0,C_2)$ and $i=m+1,\dots,n$. Denote by $\overline T^c_{\epsilon_2} (t)$, $\underline T^c_{\epsilon_2} (t)$ and $T(t)$ the semigroups generated by $d_I\Delta-\overline V^c_{\epsilon_2}$, $d_I\Delta-\underline V^c_{\epsilon_2}$ and $d_I\Delta-V(x,u^0(x))$, respectively.
Note that
\begin{equation}
-\overline V^c_{\epsilon_2}\le -V(x,u^0(x))\le -\underline V^c_{\epsilon_2}
\end{equation}
for any $d_1,\dots,d_m>0$ and $d_{m+1},\dots,d_n\in(0,C_2)$, and $-\overline V^c_{\epsilon_2}$ is cooperative for any $x\in\overline\Omega$.
Then it follows from Lemma \ref{compa} that for any $\phi\in X_+$ (defined in Eq. \eqref{X}), $d_1,\dots,d_m>0$ and $d_{m+1},\dots,d_n\in(0,C_2)$,
\begin{equation}\label{estT}
\overline T^c_{\epsilon_2} (t)\phi\le T(t)\phi\le \underline T^c_{\epsilon_2} (t)\phi.
\end{equation}
Note that $$s\left(d_I\Delta-\underline V^c_{\epsilon_2}\right)=s(-\underline V^c_{\epsilon_2})<0,\;\;s\left(d_I\Delta-\overline V^c_{\epsilon_2}\right)=s\left(-\overline V^c_{\epsilon_2}\right)<0.$$
This, combined with Lemma \ref{kr} and the spectral mapping theorem, implies
that $r\left(T^c_{\epsilon_2} (t)\right),r\left(T^c_{\epsilon_2} (t)\right)\in(0,1)$.
Therefore, for any $d_1,\dots,d_m>0$ and $d_{m+1},\dots,d_n\in(0,C_2)$,
$ s\left(d_I\Delta-V(x,u^0(x))\right)<0$,
which implies that assumption (A6) is satisfied for any $d_1,\dots,d_m>0$ and $d_{m+1},\dots,d_n\in(0,C_2)$.
It follows from Eq. \eqref{estT} that
$$\underline F^c_{\epsilon_2}\int_0^\infty \overline T^c_{\epsilon_2} (t)\phi dt\le F(x, u^0(x))\int_0^\infty T(t)\phi dt\le \overline F^c_{\epsilon_2} \int_0^\infty\underline T^c_{\epsilon_2} (t)\phi dt.$$
It follows from \cite[Theorem 3.4]{WangZhao}
that $$r\left(\underline F^c_{\epsilon_2}\int_0^\infty \overline T^c_{\epsilon_2}dt (t)\right)=r\left(\left(\overline V^c_{\epsilon_2}\right)^{-1} \underline F^c_{\epsilon_2}\right)>0,$$
and $\underline F^c_{\epsilon_2}$ is positive and not zero for any $x\in\overline\Omega.$
Then we see from Lemma \ref{krein} that,
for any $d_1,\dots,d_m>0$ and $d_{m+1},\dots,d_n\in(0,C_2)$,
$$r\left(\left(\overline V^c_{\epsilon_2}\right)^{-1} \underline F^c_{\epsilon_2}\right)\le R_0\le r\left(\overline F^c_{\epsilon_2} \int_0^\infty\underline T^c_{\epsilon_2} (t)\phi dt\right)=r\left(\left(\underline V^c_{\epsilon_2}\right)^{-1} \overline F^c_{\epsilon_2}\right).$$
Let $\underline R^c_0=r\left(\left(\overline V^c_{\epsilon_2}\right)^{-1} \underline F^c_{\epsilon_2}\right)$ and $\overline R_0^c=r\left(\left(\underline V^c_{\epsilon_2}\right)^{-1} \overline F^c_{\epsilon_2}\right)$. This completes the proof for Step 1.

\noindent {\bf Step 2}. For any $x\in \overline{\Om}$, denote
\begin{equation}\label{lowupper2x}
\begin{split}
&\mathcal D^x=\{(u_1,\dots,u_n):\;u_i=0\text{ for }i=1,\dots,m, \\
&~~~~~~~~~~u_i\in[c_i(x)-\epsilon,c_i(x)+\epsilon] \text{ for }i=m+1,\dots,n\},\\
&\underline V^x_\epsilon=\left(\min_{u\in\mathcal D^x}V_{ij}(x, u)\right)_{1\le i,j\le m}=\left(\min_{u\in\mathcal D^x}\ds\f{\partial \mathcal V_i(x, u)}{\partial u_j}\right)_{1\le i,j\le m},\\
&\overline V^x_\epsilon=\left(\max_{u\in\mathcal D^x}V_{ij}(x, u)\right)_{1\le i,j\le m}=\left(\max_{u\in\mathcal D^x}\ds\f{\partial \mathcal V_i(x, u)}{\partial u_j}\right)_{1\le i,j\le m},\\
&\underline F^x_\epsilon=\left(\min_{u\in\mathcal D^x}F_{ij}(x, u)\right)_{1\le i,j\le m}=\left(\min_{u\in\mathcal D^x}\ds\f{\partial \mathcal F_i(x, u)}{\partial u_j}\right)_{1\le i,j\le m},\\
&\overline F^x_\epsilon=\left(\max_{u\in\mathcal D^x}F_{ij}(x, u)\right)_{1\le i,j\le m}=\left(\max_{u\in\mathcal D^x}\ds\f{\partial \mathcal F_i(x, u)}{\partial u_j}\right)_{1\le i,j\le m}.
\end{split}
\end{equation}
We show that, for sufficiently small $\epsilon>0$,
\begin{equation}\label{r00}
\begin{split}
&\tilde  R_0:= r\left(\left(d_I\Delta-\underline V^x_\epsilon\right)^{-1}\overline F^x_\epsilon\right)\to\tilde  R_0^0:=\max_{x\in\overline\Omega}r\left(\left(\underline V^x_\epsilon\right)^{-1}\overline F^x_\epsilon\right),\\
&\check R_0:=r\left(\left(d_I\Delta-\overline V^x_\epsilon\right)^{-1}\underline F^x_\epsilon\right)\to\check  R_0^0:=\max_{x\in\overline\Omega}r\left(\left(\overline V^x_\epsilon\right)^{-1}\underline F^x_\epsilon\right),
\end{split}
\end{equation}
as $d_I=(d_1,\dots,d_m)\to (0,\dots,0)$.

We can view matrices $-\underline V^x_\epsilon+a\overline F^x_\epsilon$ and $-\overline V^x_\epsilon+a\underline F^x_\epsilon$ as matrix-valued functions of
$(x,\epsilon,a)$. Then $-\underline V^x_\epsilon+a\overline F^x_\epsilon$ and $-\overline V^x_\epsilon+a\underline F^x_\epsilon$ are continuous and consequently uniformly continuous on $\overline\Omega\times[0,\epsilon_2]\times[1/\overline R_0,1/\underline R_0]$ (see Proposition \ref{pro1}).
This implies that
\begin{equation}
\begin{split}
\lim_{\epsilon\to0} (-\underline V^x_\epsilon+a\overline F^x_\epsilon)=-V(x,c(x))+aF(x,c(x))\\
\lim_{\epsilon\to0}(-\overline V^x_\epsilon+a\underline F^x_\epsilon)=-V(x,c(x))+aF(x,c(x))
\end{split}
\;\;\text{uniformly for}\;\; (x,a)\in\overline\Omega\times[1/\overline R_0,1/\underline R_0].
\end{equation}
Therefore,
there exists $\epsilon_3<\epsilon_2$ such that for any $\epsilon \in(0,\epsilon_3)$,
matrices $-\underline V^x_\epsilon+a\overline F^x_\epsilon$ and $-\overline V^x_\epsilon+a\underline F^x_\epsilon$ are irreducible for
any $a\in[1/\overline R_0,1/\underline R_0]$ and $x\in\overline\Omega$.
In this step, we always assume that $\epsilon\in(0,\epsilon_3]$. Clearly,
\begin{equation}
-\overline V^c_{\epsilon}\le -\underline V^x_\epsilon \le -\underline V^c_{\epsilon}.
\end{equation}
Noticing that $s(-\underline V^c_\epsilon),s(-\overline V^c_\epsilon)<0$ and $-\underline V^c_\epsilon$ is cooperative for any $x\in\overline\Omega$, we have
$s\left(d_I\Delta-\underline V^x_\epsilon\right)<0$.

Clearly, $\tilde R_0\in [\underline R_0,\overline R_0]$ and $\tilde R_0>0$. Let $\tilde \kappa=1/\tilde R_0$, and it follows from Lemma \ref{kr} that
$\tilde R_0$ is an eigenvalue of $\left(d_I\Delta-\underline V^x_\epsilon\right)^{-1}\overline F^x_\epsilon$ with a non-negative eigenvector
$\tilde \phi=(\tilde\phi_1,\dots,\tilde\phi_m)$. Clearly, $\tilde \kappa$ can be viewed as a function of $d_I$ (or respectively $(d_1,\dots,d_m)$), and
\begin{equation*}
d_I\Delta \tilde \phi-\underline V^x_\epsilon\tilde \phi+\tilde \kappa(d_1,\dots,d_m)\overline F^x_\epsilon\tilde \phi=0.
\end{equation*}
Let $\delta=\delta(d_1,\dots,d_m,a)$ be the principal eigenvalue of the auxiliary eigenvalue problem
\begin{equation}\label{aeigen}
d_I\Delta  \phi-\underline V^x_\epsilon\phi+a\overline F^x_\epsilon \phi=\delta\phi.
\end{equation}
Note that $\underline V^x_\epsilon+\tilde \kappa(d_1,\dots,d_m)\overline F^x_\epsilon$ is irreducible.
Then $\tilde \phi>0$, and $$\delta(d_1,\dots,d_m,\tilde \kappa(d_1,\dots,d_m))=0.$$It follows from \cite[Theorem 1.4]{LamLou} that
\begin{equation*}
\lim_{(d_1,\dots,d_m)\to(0,\dots,0)} \delta(d_1,\dots,d_m,a)=\max_{x\in\overline\Omega}\hat \delta \left(-\underline V^x_\epsilon+a\overline F^x_\epsilon \right).
\end{equation*}
Here $\hat\delta(Q)$ represents the eigenvalue of matrix $Q$ with greatest real part.
Define $$\delta(d_1,\dots,d_m,a):=\max_{x\in\overline\Omega}\hat \delta \left(-\underline V^x_\epsilon+a\overline F^x_\epsilon \right)$$ for $(d_1,\dots,d_m)=(0,\dots,0)$. Then, for each $a\in[1/\overline R_0,1/\underline R_0]$, $\delta(d_1,\dots,d_m,a)$ is a continuous function of $(d_1,\dots,d_m)$ on ${\rm Int}(\mathbb R^m_+)\cup\{(0,\dots,0)\}$.
It follows from Lemma \ref{monot} that $\delta(d_1,\dots,d_m,a)$ is strictly increasing in $a$ for each $(d_1,\dots,d_m)>(0,\dots,0)$.
Similarly, we see from Lemma \ref{monot} that, for each $x\in\overline \Omega$, $\hat \delta \left(-\underline V^x_\epsilon+a\overline F^x_\epsilon \right)$ is also strictly increasing in $a$. This implies that $\delta(d_1,\dots,d_m,a)$ is also strictly increasing in $a$ for $(d_1,\dots,d_m)=(0,\dots,0)$.
Since for any $x\in\overline\Omega$,
$$\underline V_{\epsilon_2}\le \underline V^x_\epsilon\le \overline V_{\epsilon_2} ,\;\; \underline F_{\epsilon_2}\le\overline F^x_\epsilon\le  \overline F_{\epsilon_2},$$
it follows from Step 1 that
$$\underline R_0\le r\left(\left(\underline V^x_\epsilon\right)^{-1}\overline F^x_\epsilon\right)\le\overline R_0,$$
for any $x\in\overline\Omega$, and
\begin{equation}\label{bonds}
\tilde R_0=r\left(\left(d_I\Delta-\underline V^x_\epsilon\right)^{-1}\overline F^x_\epsilon\right)\in[\underline R_0,\overline R_0].
\end{equation}
Noticing that, for each $x\in\overline \Omega$,
\begin{equation*}
\hat \delta \left(-\underline V^x_\epsilon+\ds\f{1}{r\left(\left(\underline V^x_\epsilon\right)^{-1}\overline F^x_\epsilon\right)}\overline F^x_\epsilon \right)=0.
\end{equation*}
Then the monotonicity of $\hat \delta \left(-\underline V^x_\epsilon+a\overline F^x_\epsilon \right)$ in $a$ implies that, for any $x\in\overline\Omega$,
\begin{equation*}
\hat \delta \left(-\underline V^x_\epsilon+\ds\f{1}{\tilde R_0^0}\overline F^x_\epsilon \right)\le0,
\end{equation*}
where $\tilde R_0^0$ is defined as in Eq. \eqref{r00},
and the equality holds if and only if $x$ achieves the maximum point of $r\left(\left(\underline V^x_\epsilon\right)^{-1}\overline F^x_\epsilon\right)$.
Therefore, the monotonicity of $\delta(0,\dots,0,a)$ implies that the unique zero of  $$\delta(0,\dots,0,a)=\max_{x\in\overline\Omega}\hat \delta \left(-\underline V^x_\epsilon+a\overline F^x_\epsilon \right)=0$$ on $[1/\overline R_0,1/\underline R_0]$ is $a=1/\tilde R_0^0$.

Now we claim that the first equation of \eqref{r00} holds. If it is not true, then $$\kappa(d_1,\dots,d_m)\not\to 1/ \tilde R_0^0\;\;\text{as}\;\; (d_1,\dots,d_n)\to (0,\dots,0).$$ Noticing that $\kappa(d_1,\dots,d_m)$ is bounded from Eq. \eqref{bonds}, we see that there exists a sequence $\left\{\left(d_1^{(j)},\dots,d_m^{(j)}\right)\right\}_{j=1}^\infty$ and $\kappa_0\left(\ne1/ \tilde R_0^0\right)\in[1/\overline R_0,1/\underline R_0]$ such that $$\left(d_1^{(j)},\dots,d_m^{(j)}\right)\to(0,\dots,0),\;\;\kappa_n:=\kappa\left(d_1^{(j)},\dots,d_m^{(j)}\right)\to \kappa_0\;\;\text{as}\;\; j\to \infty.$$
Without loss of generality, we assume that $\kappa_0<1/ \tilde R_0^0$. Then there exist $\tilde \epsilon$ and $j_0$ such that
$\kappa_0+\tilde \epsilon<1/ \tilde R_0^0$ and $\kappa_j<\kappa_0+\tilde \epsilon$ for any $j>j_0$. Then, for any $j>j_0$,
$$0=\delta\left(d_1^{(j)},\dots,d_m^{(j)},\kappa_j\right)<\delta\left(d_1^{(j)},\dots,d_m^{(j)},\kappa_0+\tilde \epsilon\right),$$
which yields
$$0\le\lim_{j\to\infty}\delta\left(d_1^{(j)},\dots,d_m^{(j)},\kappa_0+\tilde \epsilon\right)=\delta(0,\dots,0,\kappa_0+\tilde\epsilon)<0.$$
This is a contradiction, and therefore, the first equation of \eqref{r00} holds. Similarly, we can prove that the second equation of \eqref{r00} holds.

\noindent {\bf Step 3.} We show that $$\lim_{(d_1,\dots,d_n)\to(0,\dots,0)}R_0= \max_{x\in\overline\Omega}\left[r\left(-V^{-1}(x,c(x))F(x,c(x))\right)\right].$$

Clearly, $\left(\underline V_\epsilon^x\right)^{-1}\overline F_\epsilon^x $ can be viewed as a matrix-valued function of $(x,\epsilon)$, where $(x,\epsilon)\in\overline\Omega\times[0,\epsilon_3]$, and $\left(\underline V_\epsilon^x\right)^{-1}\overline F_\epsilon^x $ is continuous on
$\overline \Omega\times[0,\epsilon_3]$ (see Proposition \ref{pro1}). It follows from \cite[Section 2.5.7]{Kato} that $r\left(\left(\underline V_\epsilon^x\right)^{-1}\overline F_\epsilon^x\right)$ is continuous on $\overline \Omega\times[0,\epsilon_3]$, and consequently, $r\left(\left(\underline V_\epsilon^x\right)^{-1}\overline F_\epsilon^x\right)$ is uniformly continuous on $\overline \Omega\times[0,\epsilon_3]$. This implies that
$$\lim_{\epsilon\to0}r\left(\left(\underline V_\epsilon^x\right)^{-1}\overline F_\epsilon^x\right)=r\left(-V^{-1}(x,c(x))F(x,c(x))\right)\;\;\text{in}\;\;C(\overline\Omega).$$
Then $$\lim_{\epsilon\to0}\tilde R_0=\lim_{\epsilon\to0}\max_{x\in\overline\Omega}r\left[\left(\left(\underline V_\epsilon^x\right)^{-1}\overline F_\epsilon^x\right)\right]=R_0^c=\max_{x\in\overline\Omega}\left[r\left(-V^{-1}(x,c(x))F(x,c(x))\right)\right].$$
Similarly, we can prove that
$$\lim_{\epsilon\to0}\check R_0=R_0^c.$$
For any $\epsilon\in(0,\epsilon_3)$, there exists $\delta>0$ such that for any $d_{m+1},\dots,d_n<\delta$,
$$u^0_{i}(x)\in[c_i(x)-\epsilon,c_i+\epsilon] \;\;\text{for any}\;\;i=m+1,\dots,n\;\;\text{and}\;\;x\in\overline\Omega.$$
Then
$$\check R_0=r\left(\left(d_I\Delta-\overline V^x_\epsilon\right)^{-1}\underline F^x_\epsilon\right)\le R_0\le \tilde R_0= r\left(\left(d_I\Delta-\underline V^x_\epsilon\right)^{-1}\overline F^x_\epsilon\right)$$
for any $d_1,\dots,d_m>0$ and $d_{m+1},\dots,d_n<\delta$.
Therefore,
$$\check R_0\le \liminf_{(d_1,\dots,d_n)\to(0,\dots,0)}R_0\le \limsup_{(d_1,\dots,d_n)\to(0,\dots,0)} R_0\le\tilde R_0^0.$$
Taking $\epsilon\to 0$, we see that
$$\lim_{(d_1,\dots,d_n)\to(0,\dots,0)}R_0=R_0^c.$$
This completes the proof.
\end{proof}

\begin{remark}\label{re1}
In Theorem \ref{bounds2}, we assume that
there exists $\epsilon_0>0$ such that, for any $x\in\overline\Omega$, $-\overline V^c_{\epsilon_0}$ is cooperative and $\underline F^c_{\epsilon_0}$ is positive. In Section 4, we will show that in some concrete examples, any off-diagonal entry in $\overline V^c_{\epsilon}$, $\overline V^c_{\epsilon}$ either
equals to zero or is strictly positive, and any entry of
$\overline F^c_{\epsilon}$ or $\underline F^c_{\epsilon}$ is strictly positive. In that case we only need to assume that
$-\overline V^c_{0}$ is cooperative and $\underline F^c_{0}$ is positive to obtain results in Theorem \ref{bounds2}.
\end{remark}

\subsection{Large diffusion rates}
In this subsection, we consider the asymptotic profile of $R_0$ when $(d_1,\dots,d_n)\to(\infty,\dots,\infty)$. For this case, we impose an additional assumption:
\begin{enumerate}
\item [(A8)] The disease-free equilibrium $(0,\dots,0,u_{m+1}^0(x),\dots,u_n^0(x))$ (defined in Eq. \eqref{equi}) satisfies
\begin{equation}
\lim_{(d_{m+1},\dots,d_n)\to (\infty,\dots,\infty)} (u_{m+1}^0(x),\dots,u_n^0(x))= (\tilde u_{m+1},\dots,\tilde u_n) \;\;\text{in}\;\; C\left(\overline\Omega, \mathbb R^{n-m}\right),
\end{equation}
where $\tilde u_k$ is a positive constant for $k=m+1,\dots,n$.
\end{enumerate}
We will also show that this assumption is not restrictive and it is satisfied for many kinds of epidemic models in the next section.
Denote
\begin{equation}\label{tttu}
\tilde u=(0,\dots,0,\tilde u_{m+1},\dots,\tilde u_n)\in \mathbb{R}^n,\\
\end{equation}
and denote, for given sufficiently small $\epsilon$ $(0<\epsilon<\min\{\tilde u_i:i=m+1,\dots,n\})$,
\begin{equation}\label{lowupper}
\begin{split}
\mathcal D=&\{(u_1,\dots,u_n):u_i=0\text{ for }i=1,\dots,m,\\
&u_i\in[\tilde u_i-\epsilon,\tilde u_i+\epsilon] \text{ for }i=m+1,\dots,n\},\\
\underline V_\epsilon=&\left(\min_{x\in\overline\Omega,u\in\mathcal D}V_{ij}(x, u)\right)_{1\le i,j\le m}=\left(\min_{x\in\overline\Omega,u\in\mathcal D}\ds\f{\partial \mathcal V_i(x, u)}{\partial u_j}\right)_{1\le i,j\le m},\\
\overline V_\epsilon=&\left(\max_{x\in\overline\Omega,u\in\mathcal D}V_{ij}(x, u)\right)_{1\le i,j\le m}=\left(\max_{x\in\overline\Omega,u\in\mathcal D}\ds\f{\partial \mathcal V_i(x, u)}{\partial u_j}\right)_{1\le i,j\le m},\\
\underline F_\epsilon=&\left(\min_{x\in\overline\Omega,u\in\mathcal D}F_{ij}(x, u)\right)_{1\le i,j\le m}=\left(\min_{x\in\overline\Omega,u\in\mathcal D}\ds\f{\partial \mathcal F_i(x, u)}{\partial u_j}\right)_{1\le i,j\le m},\\
\overline F_\epsilon=&\left(\max_{x\in\overline\Omega,u\in\mathcal D}F_{ij}(x, u)\right)_{1\le i,j\le m}=\left(\max_{x\in\overline\Omega,u\in\mathcal D}\ds\f{\partial \mathcal F_i(x, u)}{\partial u_j}\right)_{1\le i,j\le m}.
\end{split}
\end{equation}
Similar to subsection 3.1, we could also prove that $\overline F_\epsilon$ and
$\overline V_\epsilon$ are monotone decreasing for $\epsilon\ge0$, and  $\underline F_\epsilon$ and
$\underline V_\epsilon$ is monotone increasing for $\epsilon\ge0$.
Moreover, when $\epsilon=0$,
\begin{equation*}
\begin{split}
&\underline V_0=\left(\min_{x\in\overline\Omega}V_{ij}(x,\tilde u)\right)_{1\le i,j\le m}=\left(\min_{x\in\overline\Omega}\ds\f{\partial \mathcal V_i(x,\tilde u)}{\partial u_j}\right)_{1\le i,j\le m},\\
&\overline V_0=\left(\max_{x\in\overline\Omega}V_{ij}(x,\tilde u)\right)_{1\le i,j\le m}=\left(\max_{x\in\overline\Omega}\ds\f{\partial \mathcal V_i(x,\tilde u)}{\partial u_j}\right)_{1\le i,j\le m},\\
&\underline F_0=\left(\min_{x\in\overline\Omega}F_{ij}(x,\tilde u)\right)_{1\le i,j\le m}=\left(\min_{x\in\overline\Omega}\ds\f{\partial \mathcal F_i(x,\tilde u)}{\partial u_j}\right)_{1\le i,j\le m},\\
&\overline F_0=\left(\max_{x\in\overline\Omega}F_{ij}(x,\tilde u)\right)_{1\le i,j\le m}=\left(\max_{x\in\overline\Omega}\ds\f{\partial \mathcal F_i(x,\tilde u)}{\partial u_j}\right)_{1\le i,j\le m}.\\
\end{split}
\end{equation*}
Now we show the asymptotic profile of $R_0$ as $(d_1,\dots,d_n)\to (\infty,\dots,\infty)$.

\begin{theorem}\label{bounds}
Assume that (A1)-(A5) and (A8) hold, $$s\left(\overline V_0\right)<0,\; s\left(-\underline V_0\right)<0\;\;\text{and}\;\; r(\overline V_0^{-1}\underline F_0)>0,$$
and there exists $\epsilon_0>0$ such that, for any $x\in\overline\Omega$, $-\overline V_{\epsilon_0}$ is cooperative and $\underline F_{\epsilon_0}$ is positive, where
$\underline V_\epsilon$, $\overline V_{\epsilon}$ and $\underline F_{\epsilon}$ are defined in Eq. \eqref{lowupper}.
Let
\begin{equation}\label{int}
\begin{split}
&\check V=\left(\int_\Omega V_{ij}(x,\tilde u)dx\right)_{1\le i,j\le m}=\left(\int_\Omega \ds\f{\partial \mathcal V_i(x,\tilde u)}{\partial u_j}dx\right)_{1\le i,j\le m},\\
&\check F=\left(\int_\Omega F_{ij}(x,\tilde u)dx\right)_{1\le i,j\le m}=\left(\int_\Omega \ds\f{\partial \mathcal F_i(x,\tilde u)}{\partial u_j}dx\right)_{1\le i,j\le m}.
\end{split}
\end{equation}
If $r(\check V^{-1}\check F)$ is the unique eigenvalue of $\check V^{-1}\check F$ with an eigenvector in $\mathbb R^m_+\setminus\{\mathbf 0\}$, then
$$\lim_{(d_1,\dots,d_n)\to(\infty,\dots,\infty)}R_0=r\left(\check V^{-1} \check F\right).$$
\end{theorem}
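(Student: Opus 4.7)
The plan is to adapt the three-step strategy of Theorem \ref{bounds2}, replacing the small-diffusion eigenvalue asymptotics from \cite{LamLou} by a large-diffusion analog: for cooperative systems with Neumann boundary conditions, the principal eigenfunction flattens to a constant vector whose corresponding eigenvalue is that of the spatially averaged reaction matrix.

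In the first step, I would mirror Step 1 of Theorem \ref{bounds2} to obtain $d$-uniform bounds on $R_0$. Using the monotonicity and continuity in $\epsilon$ of $\underline V_\epsilon,\overline V_\epsilon,\underline F_\epsilon,\overline F_\epsilon$ (Proposition \ref{pro1}) together with the hypotheses at $\epsilon=0$, I would choose $\epsilon_2\in(0,\epsilon_0]$ so small that $-\overline V_{\epsilon_2}$ is cooperative, $\underline F_{\epsilon_2}$ is positive, $s(-\underline V_{\epsilon_2}),s(-\overline V_{\epsilon_2})<0$, and $r((\overline V_{\epsilon_2})^{-1}\underline F_{\epsilon_2})>0$. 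By (A8), there exists $C_2>0$ such that $\underline V_{\epsilon_2}\le V(x,u^0(x))\le \overline V_{\epsilon_2}$ and $\underline F_{\epsilon_2}\le F(x,u^0(x))\le \overline F_{\epsilon_2}$ whenever $d_{m+1},\dots,d_n>C_2$. Lemma \ref{compa} then sandwiches the semigroup $T(t)$ between those generated by $d_I\Delta-\overline V_{\epsilon_2}$ and $d_I\Delta-\underline V_{\epsilon_2}$; combined with Lemmas \ref{kr} and \ref{krein} and \cite[Theorem 3.4]{WangZhao}, this verifies (A6) and yields
$$r\bigl((\overline V_{\epsilon_2})^{-1}\underline F_{\epsilon_2}\bigr)=:\underline R_0\le R_0\le \overline R_0:=r\bigl((\underline V_{\epsilon_2})^{-1}\overline F_{\epsilon_2}\bigr)$$
throughout the regime $d_1,\dots,d_m>0$, $d_{m+1},\dots,d_n>C_2$.

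The second and decisive step identifies, for each $a>0$, the large-diffusion limit of the principal eigenvalue $\la(a,d_1,\dots,d_n)$ of
$$d_I\Delta\Phi-V(x,u^0(x))\Phi+aF(x,u^0(x))\Phi=\la\Phi,\qquad \partial_\nu\Phi|_{\partial\Omega}=0.$$
The claim, parallel to \cite[Theorem 1.4]{LamLou} in the opposite regime, is that
$$\lim_{(d_1,\dots,d_n)\to(\infty,\dots,\infty)}\la(a,d_1,\dots,d_n)=\hat\delta\Bigl(\tfrac{1}{|\Omega|}(-\check V+a\check F)\Bigr)$$
uniformly for $a$ on any compact subset of $(0,\infty)$. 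To prove this I would normalize the eigenfunction by $\|\Phi\|_\infty=1$; dividing each scalar component of the PDE by $d_i$ gives $\|\Delta\phi_i\|_{L^p}=O(1/d_i)$, and Neumann $W^{2,p}$ estimates plus compact embedding force every subsequential limit of $\Phi$ to be a constant vector $\Phi_\infty\ge 0$. Integrating the PDE over $\Omega$ eliminates the diffusive terms via the boundary conditions, and passing to the limit using (A8) yields $(-\check V+a\check F)\Phi_\infty=\la_\infty|\Omega|\Phi_\infty$, identifying $\la_\infty=\hat\delta((-\check V+a\check F)/|\Omega|)$.

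The third step parallels the endgame of Theorem \ref{bounds2}: the monotonicity of $a\mapsto\la(a,d_1,\dots,d_n)$ (Lemma \ref{monot}) makes $\kappa(d_1,\dots,d_n):=1/R_0$ the unique zero of this function in $[1/\overline R_0,1/\underline R_0]$, while the corresponding matrix monotonicity (from Krein--Rutman applied to a shifted non-negative matrix) together with the uniqueness-of-Perron-eigenvector hypothesis on $\check V^{-1}\check F$ makes $a=1/r(\check V^{-1}\check F)$ the unique zero of the limiting function $a\mapsto\hat\delta((-\check V+a\check F)/|\Omega|)$. A contradiction argument identical to the end of Theorem \ref{bounds2}, invoking the uniform convergence from Step 2, then forces $\kappa\to 1/r(\check V^{-1}\check F)$, i.e.\ $R_0\to r(\check V^{-1}\check F)$. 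The principal obstacle lies in Step 2: one must simultaneously control the flattening of $\Phi$ as $d_1,\dots,d_m$ diverge and the convergence $u^0(x)\to\tilde u$ as $d_{m+1},\dots,d_n$ diverge, which requires $W^{2,p}$ estimates that are uniform in these parameters jointly and exploits the Neumann spectral gap to rule out non-constant limiting profiles.
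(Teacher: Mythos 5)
Your Step 1 matches the paper's Step 1 exactly, and your core analytic mechanism in Step 2 (normalize the eigenfunction, observe $\|\Delta\phi_i\|_{L^p}=O(1/d_i)$, use Neumann $W^{2,p}$ estimates and compactness to force constant limits, then integrate over $\Omega$ to kill the diffusion terms and pass to the averaged matrix via (A8)) is precisely the paper's mechanism. But the paper deploys it more directly than you do: it applies the compactness argument to the eigenfunction $\hat\phi^{(k)}$ of $-\left(d_I\Delta-V(x,u^0)\right)^{-1}F(x,u^0)$ associated with $R_0$ itself, i.e.\ at the single value $a=\kappa(d)=1/R_0$ along a subsequence where $\kappa\to\kappa^*$. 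The limit equation is then exactly $\check V c^\infty=\kappa^*\check F c^\infty$ with $c^\infty\ge 0$, $|c^\infty|=1$, so the hypothesis that $r(\check V^{-1}\check F)$ is the \emph{unique} eigenvalue of $\check V^{-1}\check F$ with a nonnegative eigenvector immediately gives $1/\kappa^*=r(\check V^{-1}\check F)$, and the subsequence argument finishes the proof. No auxiliary $a$-parametrized eigenvalue problem, no monotonicity in $a$, and no analog of \cite[Theorem 1.4]{LamLou} is needed.

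Your detour through the family $\la(a,d)$ introduces a step that the stated hypotheses do not support. Your compactness argument shows only that every subsequential limit $\la_\infty$ of $\la(a,d)$ is \emph{some} eigenvalue of $\frac{1}{|\Omega|}(-\check V+a\check F)$ possessing a nonnegative eigenvector; since this cooperative matrix is not assumed irreducible, such eigenvalues need not be unique, and one only gets $\la_\infty\le\hat\delta\bigl(\frac{1}{|\Omega|}(-\check V+a\check F)\bigr)$. The theorem's uniqueness hypothesis concerns nonnegative eigenvectors of $\check V^{-1}\check F$, which pins down the limit only in the case $\la_\infty=0$ (i.e.\ at $a=1/r(\check V^{-1}\check F)$), not for general $a$. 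Consequently, in your final contradiction argument the case $\kappa_0<1/r(\check V^{-1}\check F)$ goes through (you need $\limsup\la(\kappa_0+\tilde\epsilon,d)\le\hat\delta<0$, which the upper bound provides), but the case $\kappa_0>1/r(\check V^{-1}\check F)$ requires $\liminf\la(\kappa_0-\tilde\epsilon,d)\ge\hat\delta>0$, a lower bound you have not established and which does not follow from the subsequential argument alone. Either supply that lower bound (e.g.\ by an additional comparison or irreducibility assumption) or, better, drop the auxiliary family and run the compactness argument directly at $a=1/R_0$ as the paper does.
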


\begin{proof}
As in the Step 1 of Theorem \ref{bounds2}, we could prove that there exist positive constants $\underline R_0$, $\overline R_0$ and $C_2$ such that
$R_0\in[\underline R_0,\overline R_0]$ for any $d_1,\dots,d_m>0$ and $d_{m+1},\dots,d_n>C_2$.
Let $\kappa=1/R_0$, and $\kappa$ can be viewed as function of $(d_1,\dots,d_n)$. Since $k(d_1,\dots,d_n)$ is bounded for any $d_1,\dots,d_m>0$ and $d_{m+1},\dots,d_n>C_2$. Then, for any sequence $\{(d^{(j)}_1,\dots,d^{(j)}_n)\}_{j=1}^\infty$ satisfying
$(d^{(j)}_1,\dots,d^{(j)}_n) \to (\infty,\dots,\infty)$ as$j\to\infty,$
there exists a subsequence $\{(d^{(j_k)}_1,\dots,d^{(j_k)}_n)\}_{j=1}^\infty$ such that
$\ds\lim_{k\to\infty}\kappa\left(d^{(j_k)}_1,\dots,d^{(j_k)}_n\right)$ exists and is positive,
which is denoted by $\kappa^*$. For convenience, we denote $d^{(j_k)}_i$ by $d_i^{(k)}$ for each $k\ge1$ and $i=1,\dots,n$.
Without loss of generality, we assume that $d^{(k)}_i\ge C_2$ for any $k\ge 1$ and $i=m+1,\dots,n$.

Let $\hat\phi^{(k)}=(\hat\phi^{(k)}_1,\dots,\hat\phi^{(k)}_m)^T\ge(0,\dots,0)^T$ be the corresponding eigenvector of operator
$$-\left(d_I\Delta-V(x,u^0(x)\right)^{-1} F(x,u^0(x))$$ with respect to eigenvalue $ R_0(d^{(k)}_1,\dots,d^{(k)}_n)$, where $\|\hat\phi^{(k)}\|_\infty=1$ for each $k\ge1$. That is, for $i=1,\dots,m$,
$$\Delta \hat\phi_i^{(k)}+\ds\f{1}{d_i^{(k)}}\left[-\sum_{i=1}^{m}V_{ij}(x,u^0(x))\hat\phi_j^{(k)}+\kappa\left(d^{(k)}_1,\dots,d^{(k)}_n\right)\sum_{j=1}^mF_{ij}(x,u^0(x))\hat\phi_j^{(k)}\right]=0,$$
where $u^0(x)$ depends on $\left(d^{(k)}_{m+1},\dots,d^{(k)}_n\right)$.
Then it follows from the $L^p$ theory that there exists a subsequence $\{k_l\}_{l=1}^\infty$ such that
$\ds\lim_{l\to\infty}\hat\phi_i^{(k_l)}=c_i^\infty$ in $C(\overline\Omega,\mathbb R)$ for each $i=1,\dots,m$, where $c_i^{\infty}$ is a nonnegative constant, and $c^\infty:=(c_1^\infty,\dots,c_m^\infty)^T$ satisfies
$$|c^\infty|=1,\;\;\text{and}\;\;\check V c^\infty=\kappa^*\check Fc^\infty.$$
Then $1/\kappa^*=r(\check V^{-1}\check F)$. This completes the proof.
\end{proof}
\begin{remark}
We remark that there always exists a decomposition $$\left\{\left(\mathcal F_i(x,u),\mathcal V_i(x,u)\right)\right\}_{i=1}^n$$ of $\{f_i(x,u)\}_{i=1}^n$ such that $$f_i(x,u)=\mathcal F_i(x,u)-\mathcal V_i(x,u) \;\;\text {for}\;\;i=1,\dots,n,\;\;\text{and}\;\;rank(F)=1.$$
Consequently $r(\check V^{-1}\check F)$ is the unique eigenvalue of $\check V^{-1}\check F$ with an eigenvector in $\mathbb R^n_+\setminus\{\mathbf 0\}$. Moreover, different decompositions of $\{f_i(x,u)\}_{n=1}^\infty$ will not change the portion of parameter space that the disease vanishes or spreads.
Actually, if there exist two decompositions $$\left\{\left(\mathcal F^{(j)}_i(x,u),\mathcal V^{(j)}_i(x,u)\right)\right\}_{i=1}^n\;(j=1,2),$$
then there exist two basic reproduction numbers $R_0^{(1)}$ and $R_0^{(2)}$. It follows from \cite[Theorem 3.5]{Thieme2009} that
$R_0^{(1)}-1$ and $R_0^{(2)}-1$ have the same signs.
\end{remark}

\begin{remark}
In Theorem \ref{bounds}, we assume that
there exists $\epsilon_0>0$ such that, for any $x\in\overline\Omega$, $-\overline V_{\epsilon_0}$ is cooperative and $\underline F_{\epsilon_0}$ is positive. In Section 4, we will show that in some concrete examples, any off-diagonal entry in $\overline V_{\epsilon}$, $\overline V_{\epsilon}$
equals to zero or is strictly positive, and any entry of
$\overline F_{\epsilon}$ or $\underline F_{\epsilon}$ is strictly positive. Therefore we only need to show that
$-\overline V_{0}$ is cooperative and $\underline F_{0}$ is positive to obtain results in Theorem \ref{bounds}.
\end{remark}

\section{Applications}
In this section, we  give some examples to show that the general results in Theorems \ref{bounds2} and \ref{bounds} can be applied to many different reaction-diffusion epidemic models.

\subsection{Vector-host epidemic models}
We consider two vector-host epidemic models. The first is given by \cite{Web2017} to model the outbreak of Zika in Rio De Janerio:
\begin{equation}\label{zika}
\begin{cases}
\ds\f{\partial H_i}{\partial t}-\delta_1\Delta H_i=-\la(x) H_i+\sigma_1(x)H_u(x)V_i,&x\in\Omega,\;t>0,\\
\ds\f{\partial V_i}{\partial t}-\delta_2\Delta V_i=\sigma_2(x) V_uH_i-\mu(x)(V_u+V_i)V_i,&x\in\Omega,\;t>0,\\
\ds\f{\partial V_u}{\partial t}-\delta_3\Delta V_u=-\sigma_2(x) V_uH_i+\beta(x)(V_u+V_i)-\mu(x)(V_u+V_i)V_u,&x\in\Omega,\;t>0,\\
\ds\f{\partial H_i}{\partial \nu}=\ds\f{\partial V_i}{\partial \nu}=\ds\f{\partial V_u}{\partial \nu}, &x\in\partial \Omega,\;t>0,
\end{cases}
\end{equation}
where $H_u(x)$, $H_i(x,t)$, $V_i(x,t)$ and $V_u(x,t)$ are the densities of uninfected hosts, infected hosts, infected vectors and
uninfected vectors at space $x$ and time $t$, respectively, $\Omega$ is a bounded domain with smooth boundary $\partial \Omega$, $\nu$ is the outward unit normal vector on $\partial \Omega$,
$\delta_1,\delta_2,\delta_3$ are positive constants, and
$\la(x)$, $H_u(x)$, $\sigma_i(x)$ $(i=1,2)$, $\beta(x)$ and $\mu(x)$ are strictly positive and belong to $C^{\alpha}(\overline\Omega)$.
The asymptotic properties of $R_0$ for this model has been investigated in \cite{MagalWu}, see also \cite{WebWu2018} for the global dynamics.
We revisit it to show that the main results in Section 3 can  be applied to this model to determine the asymptotic behavior of basic reproduction number $R_0$.

Letting $$n=3, \;m=2 \;\;\text{and}\;\;(u_1,u_2,u_3)=(H_i,V_i,V_u),$$ we could use the framework in Section 3.
It follows from \cite{MagalWu} that model \eqref{zika} has a unique disease-free steady state $u^0(x)=\left(0,0,\hat V(x)\right)$, where $\hat V(x)$ satisfies
\begin{equation}
\lim_{\delta_3\to 0} \hat V(x)=\ds\f{\beta(x)}{\mu(x)} \;\;\text{and}\;\;\lim_{\delta_3\to\infty}\hat V(x)=\ds\f{\int_\Omega\beta(x) dx}{\int_\Omega\mu(x)dx} \;\;\text{in}\;\; C(\overline\Omega).
\end{equation}
This implies that assumptions (A7) and (A8) are satisfied.
For model \eqref{zika},
\begin{equation}\label{VF1}
V(x,u)=\left(\begin{array}{cc}
\la(x)&-\sigma_1(x)H_u(x)\\
0&\mu(x)u_3
\end{array}\right),\;\;F(x,u)=\left(\begin{array}{cc}
0&0\\
\sigma_2(x) u_3&0
\end{array}\right),
\end{equation}
where $u=(u_1,u_2,u_3)^T$,
and
\begin{equation}\label{B1}
B=\left(\delta_1\Delta,\delta_2\Delta\right)^T-V(x,u^0(x)).
\end{equation}
Then the basic reproduction number is given by
\begin{equation}\label{1r}
R_0=r\left(-F(x,u^0(x))B^{-1}\right).
\end{equation}
Moreover, for model \eqref{zika}, $$\tilde u=\left(0,0,\ds\f{\int_\Omega\beta dx}{\int_\Omega\mu dx}\right)\;\text{and} \;\;c(x)=\left(0,0,\ds\f{\beta(x)}{\mu(x)}\right),$$
and a direct computation implies that all the assumptions of Theorem \ref{bounds2} and \ref{bounds} are satisfied. Then
we have the following results.
\begin{proposition}
For model \eqref{zika}, the following statements hold.
\begin{enumerate}
\item [(i)] $$\lim_{(\delta_1,\delta_2,\delta_3)\to(\infty,\infty,\infty)}R_0=\ds\f{\ds\int_\Omega \sigma_1H_udx\int_\Omega\sigma_2dx}{\ds\int_\Omega\la dx\int_\Omega \mu dx}.$$
\item [(ii)] $$\lim_{(\delta_1,\delta_2,\delta_3)\to(0,0,0)}R_0=\ds\max_{x\in\overline \Omega} \ds\f{\sigma_1(x)\sigma_2(x)H_u(x)}{\la(x)\mu(x)}.$$
\end{enumerate}
\end{proposition}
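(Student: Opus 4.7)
The plan is to verify the hypotheses of Theorems \ref{bounds} and \ref{bounds2} for the Zika system \eqref{zika}, and then compute the two spectral radii explicitly for the matrices $F(x,u)$ and $V(x,u)$ given in \eqref{VF1}. The setup already identifies $m=2$, $n=3$, and $u^0(x)=(0,0,\hat V(x))$; moreover, the stated limits of $\hat V(x)$ as $\delta_3\to 0$ and $\delta_3\to\infty$ yield immediately that (A7) holds with $c(x)=(0,0,\beta(x)/\mu(x))$ and (A8) holds with $\tilde u=(0,0,\int_\Omega\beta\,dx/\int_\Omega\mu\,dx)$. The remaining structural hypotheses (A1)--(A5) follow from the explicit form of the reaction terms together with positivity of $\la,H_u,\sigma_1,\sigma_2,\beta,\mu\in C^{\alpha}(\overline\Omega)$; in particular, $M(x,u^0)=-\beta(x)<0$ gives (A5).

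Next I would examine the matrices $V(x,u)$ and $F(x,u)$ in \eqref{VF1}. Observe that the only off-diagonal entry of $V$ is $-\sigma_1(x)H_u(x)<0$ (independent of $u$), so $-V(x,u)$ is cooperative for all $(x,u)$, and the only nonzero entry of $F$ is $\sigma_2(x)u_3>0$ whenever $u_3>0$. Thus the entries of $\overline V^c_\epsilon,\underline V^c_\epsilon,\overline V_\epsilon,\underline V_\epsilon$ are either identically zero or have a fixed strict sign on a full neighborhood of $c(x)$ (resp.\ $\tilde u$), and similarly for $\overline F^c_\epsilon,\underline F^c_\epsilon,\overline F_\epsilon,\underline F_\epsilon$. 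By Remarks following Theorems \ref{bounds2} and \ref{bounds}, it therefore suffices to verify the $\epsilon=0$ conditions: $s(\overline V_0)<0$, $s(-\underline V_0)<0$, and $r(\overline V_0^{-1}\underline F_0)>0$, which hold because $\overline V_0,\underline V_0$ are upper triangular with strictly positive diagonal and $\underline F_0$ has a strictly positive $(2,1)$-entry. Irreducibility of $-V(x,c(x))+aF(x,c(x))$ for $a>0$ follows because the $(1,2)$-entry $-\sigma_1(x)H_u(x)$ is strictly negative and the $(2,1)$-entry $a\sigma_2(x)\beta(x)/\mu(x)$ is strictly positive; and for the large-diffusion case, $\check F$ in \eqref{int} has rank one, so by the Remark after Theorem \ref{bounds} the Perron eigenvalue $r(\check V^{-1}\check F)$ is the unique one with a nonnegative eigenvector.

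Having verified the hypotheses, both conclusions reduce to straightforward $2\times 2$ matrix calculations. For (ii), at each $x$ the matrix $V(x,c(x))$ is upper triangular with diagonal $(\la(x),\beta(x))$, so
\[
-V^{-1}(x,c(x))F(x,c(x))=\frac{1}{\la(x)\beta(x)}\begin{pmatrix}\sigma_1(x)H_u(x)\sigma_2(x)\beta(x)/\mu(x) & 0\\ \la(x)\sigma_2(x)\beta(x)/\mu(x) & 0\end{pmatrix},
\]
whose spectral radius is the $(1,1)$-entry $\sigma_1(x)\sigma_2(x)H_u(x)/(\la(x)\mu(x))$; taking the maximum over $\overline\Omega$ and invoking Theorem \ref{bounds2} gives (ii). For (i), the matrix $\check V$ is upper triangular with diagonal $(\int_\Omega\la\,dx,\tilde u_3\int_\Omega\mu\,dx)=(\int_\Omega\la\,dx,\int_\Omega\beta\,dx)$, and $\check F$ has the single nonzero entry $\tilde u_3\int_\Omega\sigma_2\,dx=\int_\Omega\sigma_2\,dx\cdot\int_\Omega\beta\,dx/\int_\Omega\mu\,dx$ in position $(2,1)$; the same triangular computation yields
\[
r(\check V^{-1}\check F)=\frac{\int_\Omega\sigma_1 H_u\,dx\,\int_\Omega\sigma_2\,dx}{\int_\Omega\la\,dx\,\int_\Omega\mu\,dx},
\]
and Theorem \ref{bounds} gives (i).

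The only point requiring genuine care is the verification of the abstract hypotheses of Theorem \ref{bounds2}: in particular, showing $s(-\underline V^c_\epsilon)<0$, $s(-\overline V^c_\epsilon)<0$ and positivity of $r((\overline V^c_0)^{-1}\underline F^c_0)$ uniformly as $\delta_3\to 0$, together with the irreducibility requirement. This is not a real obstacle here because of the triangular structure of $V$ and the independence of $V_{12}=-\sigma_1 H_u$ from $u$, but it is the step that requires pausing to invoke Remark \ref{re1} rather than directly checking the $\epsilon_0>0$ hypothesis. Once that is dispatched, the arithmetic above finishes both parts.
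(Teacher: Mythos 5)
Your proposal is correct and follows essentially the same route as the paper, which likewise deduces both limits by verifying the hypotheses of Theorems \ref{bounds2} and \ref{bounds} and computing the spectral radii of the triangular $2\times 2$ matrices (the paper simply asserts "a direct computation implies that all the assumptions are satisfied," so you have filled in more detail than the original). One minor inaccuracy: $M(x,u^0)=\beta(x)-2\mu(x)\hat V(x)$ rather than $-\beta(x)$ (since $\hat V\neq\beta/\mu$ for $\delta_3>0$), but (A5) still holds because $\hat V$ is the positive logistic steady state, so $s(\delta_3\Delta+\beta-\mu\hat V)=0$ and subtracting $\mu\hat V>0$ makes the spectral bound negative.
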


Next we consider another vector-host epidemic model:
\begin{equation}\label{vector}
\begin{cases}
\ds\f{\partial I}{\partial t}=d_1\Delta I+\beta_s(x)SV-\left(b(x)+\gamma(x)\right)I,&x\in\Omega,\;t>0,\\
\ds\f{\partial V}{\partial t}=d_2\Delta V+\beta_m(x)MI-c(x)V, &x\in\Omega,\;t>0,\\
\ds\f{\partial S}{\partial t}=d_3\Delta S+\la_1(x)-b(x)S+\gamma(x)I-\beta_s(x)SV,&x\in\Omega,\;t>0,\\
\ds\f{\partial M}{\partial t}=d_4\Delta M+\la_2(x)-c(x)M-\beta_m(x)MI,&x\in\Omega,\;t>0,\\
\ds\f{\partial I}{\partial \nu}=\ds\f{\partial V}{\partial \nu}=\ds\f{\partial S}{\partial \nu}=\ds\f{\partial M}{\partial \nu}=0, &x\in\partial \Omega,\;t>0,
\end{cases}
\end{equation}
where  $I(x,t)$, $V(x,t)$, $S(x,t)$ and $M(x,t)$ are the densities of infected hosts, infected vectors, susceptible hosts and
susceptible vectors at space $x$ and time $t$, respectively,
$\Omega$ is a bounded domain with smooth boundary $\partial \Omega$, $\nu$ is the outward unit normal vector on $\partial \Omega$,
$d_1,d_2,d_3,d_4$ are positive constants, and
$\la_i(x)$ $(i=1,2)$, $\beta_s(x)$, $\beta_m(x)$, $b(x)$, $\gamma(x)$ and $c(x)$ are strictly positive and belong to $C^{\alpha}(\overline\Omega)$.
The model was originally an ODE model (i.e., $d_1=d_2=d_3=d_4=0$) proposed by Feng and Velasco-Hern{\'a}ndez \cite{Feng1997}, and $R_0$ of the ODE model was obtained in \cite{Feng1997,Driessche}.

Letting $$n=4, \;m=2\;\;\text{and}\;\;(u_1,u_2,u_3,u_4)=(I,V,S,M),$$ we could use the framework in Section 3.
The model \eqref{vector}
has a unique disease-free steady state $$u^0(x)=(0,0,\hat S(x),\hat M(x)),$$ where $\left(\hat S(x),\hat M(x)\right)$ satisfies
\begin{equation}
\begin{split}
&\lim_{(d_3,d_4)\to (0,0)} \left(\hat S(x),\hat M(x)\right)=\left(\ds\f{\la_1(x)}{b(x)},\ds\f{\la_2(x)}{c(x)}\right)\\
&\lim_{(d_3,d_4)\to (0,0)}\left(\hat S(x),\hat M(x)\right)=\left(\ds\f{\int_\Omega \la_1 dx}{\int_\Omega b dx},\ds\f{\int_\Omega \la_2d x}{\int_\Omega cdx}\right)
\end{split}\;\;\text{in}\;\; C(\overline\Omega,\mathbb{R}^2).
\end{equation}
This implies that assumptions (A7) and (A8) are satisfied.
A direct computation implies that, for model \eqref{vector},
\begin{equation}\label{VF2}
F(x,u)=\left(\begin{array}{cc}
0&\beta_s(x)u_3\\
\beta_m(x) u_4&0
\end{array}\right),\;\;V(x,u)=\left(\begin{array}{cc}
b(x)+\gamma(x)&0\\
0&c(x)
\end{array}\right)
\end{equation}
for $u=(u_1,u_2,u_3,u_4)^T$, and
\begin{equation*}
B=\left(d_1\Delta,d_2\Delta\right)^T-V(x,u^0(x)).
\end{equation*}
Then the basic reproduction number is also given by \eqref{1r}.
Finally for model \eqref{vector}, $$\tilde u=\left(0,0,\ds\f{\int_\Omega \la_1dx}{\int_\Omega b dx},\ds\f{\int_\Omega \la_2dx}{\int_\Omega cdx}\right)\;\text{and} \;\;c(x)=\left(0,0,\ds\f{\la_1(x)}{b(x)},\ds\f{\la_2(x)}{c(x)}\right).$$
It is easy to check that all the assumptions of Theorem \ref{bounds2} and \ref{bounds} are satisfied. Then
we have the following results.
\begin{proposition}
For model \eqref{vector}, the following statements hold.
\begin{enumerate}
\item [(i)]
$$
\lim_{(d_1,d_2,d_3,d_4)\to(\infty,\infty,\infty,\infty)}R_0=\sqrt{\ds\f{\int_\Omega\la_1dx\int_\Omega\la_2dx\int_\Omega\beta_sdx\int_\Omega\beta_mdx}{\int_\Omega b dx\left(\int_\Omega c dx\right)^2\int_\Omega(b+\gamma)dx}}.
$$
\item [(ii)]
$$
\lim_{(d_1,d_2,d_3,d_4)\to(0,0,0,0)}R_0=\max_{x\in\overline\Omega}\sqrt{\ds\f{\la_1(x)\la_2(x)\beta_s(x)\beta_m(x)}{b(x)c^2(x)(b(x)+\gamma(x))}}.
$$
\end{enumerate}
\end{proposition}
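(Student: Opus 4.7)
The plan is to apply Theorem \ref{bounds} for part (i) and Theorem \ref{bounds2} for part (ii) directly to model \eqref{vector}. Assumptions (A1)--(A5) are immediate from the explicit reaction terms. For (A7) and (A8), note that the disease-free steady state system decouples into the two scalar linear Neumann elliptic problems
\begin{equation*}
d_3\Delta \hat S + \lambda_1(x) - b(x)\hat S = 0,\qquad d_4\Delta \hat M + \lambda_2(x) - c(x)\hat M = 0,
\end{equation*}
each of which has a unique positive solution. Standard asymptotic analysis for scalar linear elliptic equations with strictly positive coefficients (the same type used in \cite{Allen}) gives $(\hat S,\hat M)\to (\lambda_1/b,\lambda_2/c)$ in $C(\overline\Omega,\mathbb R^2)$ as $(d_3,d_4)\to (0,0)$, and $(\hat S,\hat M)\to (\int_\Omega\lambda_1 dx/\int_\Omega b\, dx,\int_\Omega\lambda_2 dx/\int_\Omega c\, dx)$ as $(d_3,d_4)\to (\infty,\infty)$.

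Next I would check the spectral/sign conditions on $F,V$ from \eqref{VF2}. Since $V(x,u)$ is diagonal with strictly positive entries $b+\gamma$ and $c$, every off-diagonal entry of $\overline V^c_\epsilon, \underline V^c_\epsilon, \overline V_\epsilon, \underline V_\epsilon$ is identically zero, so $-\overline V$ is trivially cooperative and $s(-\overline V_0^c), s(-\underline V_0^c), s(-\overline V_0),s(-\underline V_0)<0$ because the diagonal entries remain strictly positive. The matrix $F(x,u)$ is antidiagonal with strictly positive off-diagonal entries $\beta_s(x)u_3$ and $\beta_m(x)u_4$; since the components of $c(x)$ and $\tilde u$ corresponding to $u_3,u_4$ are strictly positive, $\underline F^c_\epsilon$ and $\underline F_\epsilon$ are positive for small $\epsilon$, whence $r((\overline V)^{-1}\underline F)>0$ by direct $2\times 2$ computation. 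Irreducibility of $-V(x,c(x))+aF(x,c(x))$ for every $a>0$ holds because both off-diagonal entries of $F(x,c(x))$ are strictly positive.

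With the hypotheses verified, Theorem \ref{bounds} applies and computing $\check V$ and $\check F$ from \eqref{int} yields
\begin{equation*}
\check V^{-1}\check F \;=\; \begin{pmatrix} 0 & \tilde u_3\int_\Omega\beta_s dx\big/\int_\Omega(b+\gamma)dx \\[2pt] \tilde u_4\int_\Omega\beta_m dx\big/\int_\Omega c\, dx & 0 \end{pmatrix},
\end{equation*}
an antidiagonal matrix with strictly positive off-diagonal entries. Its spectral radius is the geometric mean of those entries, and this is the unique eigenvalue with an eigenvector in $\mathbb R^2_+\setminus\{\mathbf 0\}$, so the hypothesis of Theorem \ref{bounds} is met. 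Substituting $\tilde u_3=\int_\Omega\lambda_1 dx/\int_\Omega b\, dx$ and $\tilde u_4=\int_\Omega\lambda_2 dx/\int_\Omega c\, dx$ and simplifying gives (i). For (ii), Theorem \ref{bounds2} applies, and the same pointwise computation shows that $V^{-1}(x,c(x))F(x,c(x))$ is antidiagonal with positive entries $\beta_s\lambda_1/[b(b+\gamma)]$ and $\beta_m\lambda_2/c^2$, so $r(-V^{-1}(x,c(x))F(x,c(x)))=\sqrt{\lambda_1\lambda_2\beta_s\beta_m/(bc^2(b+\gamma))}$, and (ii) follows by taking the maximum over $x\in\overline\Omega$.

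The main obstacle is the careful verification of (A7)--(A8), i.e., the asymptotic behavior of $(\hat S,\hat M)$ as the diffusion rates degenerate to $0$ or blow up to $\infty$; once that is done, the remaining work is purely algebraic, consisting of $2\times 2$ antidiagonal spectral-radius computations, and Theorems \ref{bounds} and \ref{bounds2} handle the passage to the limit in $d_1,\ldots,d_n$.
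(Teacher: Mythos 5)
Your proposal is correct and follows essentially the same route as the paper: identify $n=4$, $m=2$, verify (A1)--(A8) (the disease-free state decouples into two scalar linear elliptic problems whose small/large-diffusion limits are standard), read off $F$ and $V$ from \eqref{VF2}, and reduce both limits to the spectral radius of a $2\times 2$ antidiagonal matrix, which is the geometric mean of its off-diagonal entries and is the unique eigenvalue with a nonnegative eigenvector. The paper states these verifications as ``easy to check''; your write-up simply supplies the details, including the correct reading of the hypotheses $s(-\overline V_0)<0$, $s(-\underline V_0)<0$ that the theorem's proof actually uses.
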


\subsection{Staged progression model}
In this subsection, we consider a staged progression model proposed in \cite{stanley}. This model has a single uninfected compartment, and the infected individuals could pass through several stages of the disease with changing infectivity. It could be applied to model the transmission of many disease, such as HIV/AIDS, see \cite{stanley}. The original model was an ODE model, and the reproduction number was obtained in \cite{GuoLi,Driessche}.
Here we consider the associated reaction-diffusion case:
\begin{equation*}
\begin{cases}
\ds\f{\partial I_1}{\partial t}=d_1\Delta I_1+h(N)\left(\sum_{k=1}^{m}\beta_k(x)SI_k\right)-(\nu_1(x)+\gamma_1(x))I_1,\;\;\;&x\in\Omega,\;t>0,\\
\ds\f{\partial I_i}{\partial t}=d_i\Delta I_i+\nu_{i-1}(x)I_{i-1}-(\nu_i(x)+\gamma_i(x))I_i,\;\;\;&x\in\Omega,\;t>0,\;2\le i\le m,\\
\ds\f{\partial I_{m+1}}{\partial t}=d_{m+2}\Delta I_{m+1}+\nu_{m}(x)I_{m}-\gamma_{m+1}I_{m+1},\;\;\;&x\in\Omega,\;t>0,\\
\ds\f{\partial S}{\partial t}=d_{m+1}\Delta S+\la(x)-b(x)S-h(N)\left(\sum_{k=1}^{m}\beta_k(x)SI_k\right),\;\;\;&x\in\Omega,\;t>0,\\
\ds\f{\partial S}{\partial \nu}=\ds\f{\partial I_i}{\partial \nu}=0, \;\;\;&x\in\partial \Omega,\;t>0,\;1\le i\le m,
\end{cases}
\end{equation*}
where $N=S+\ds\sum_{i=1}^m I_i$, $h(N)=N^{-\alpha}$  with $\alpha\in[0,1]$, $S(x,t)$ is the density of the susceptible individuals, $I_i(i=1,\dots,m+1)$ is the density of the infected individuals at stage
$i$, $\Omega$ is a bounded domain with smooth boundary $\partial \Omega$, $\nu$ is the outward unit normal vector on $\partial \Omega$,
$d_i$ $(i=1,\dots,m+2)$ are positive constants, and
$\la(x)$, $b(x)$, $\beta_i(x)$ $(i=1,\dots,m)$, $\nu_i(x)$ $(i=1,\dots,m)$, $\gamma_i(x)$ $(i=1,\dots,m+1)$ are strictly positive and belong to $C^{\alpha}(\overline\Omega)$.
Note that $I_{m+1}$ decouples from the others, and consequently we could  consider the following model
\begin{equation}\label{stage}
\begin{cases}
\ds\f{\partial I_1}{\partial t}=d_1\Delta I_1+h(N)\left(\sum_{k=1}^{m}\beta_k(x)SI_k\right)-(\nu_1(x)+\gamma_1(x))I_1,\;\;\;&x\in\Omega,\;t>0,\\
\ds\f{\partial I_i}{\partial t}=d_i\Delta I_i+\nu_{i-1}(x)I_{i-1}-(\nu_i(x)+\gamma_i(x))I_i,\;\;\;&x\in\Omega,\;t>0,\;2\le i\le m,\\
\ds\f{\partial S}{\partial t}=d_{m+1}\Delta S+\la(x)-b(x)S-h(N)\left(\sum_{k=1}^{m}\beta_k(x)SI_k\right),\;\;\;&x\in\Omega,\;t>0,\\
\ds\f{\partial S}{\partial \nu}=\ds\f{\partial I_i}{\partial \nu}=0, \;\;\;&x\in\partial \Omega,\;t>0,\;1\le i\le m.
\end{cases}
\end{equation}
Letting $$n=m+1, \;(u_1,\dots,u_m)=(I_1,\dots,I_m)\;\;\text{and}\;\; u_{m+1}=S,$$ we could use the framework in Section 3.
The model \eqref{stage}
has a unique disease-free steady state $$u^0(x)=(0,\dots,0,\hat S(x)),$$ where $\hat S(x)$ satisfies
\begin{equation}
\lim_{d_{m+1}\to 0} \hat S(x)=\ds\f{\la(x)}{b(x)}\;\;\text{and}\;\;\lim_{d_{m+1}\to \infty} \hat S(x)=\ds\f{\int_\Omega\la dx}{\int_\Omega bdx}\;\;\text{in}\;\; C(\overline\Omega,\mathbb{R}).
\end{equation}
This implies that assumptions (A7) and (A8) are satisfied.
For model \eqref{stage},
\begin{equation*}
V(x,u)=\left(V_{ij}(u)\right)_{1\le i,j\le m}\;\;\text{and}\;\;F(x,u)=\left(F_{ij}(x,u)\right)_{1\le i,j\le m},
\end{equation*}
where for $u=(u_1,\dots,u_{m})^T$,
\begin{equation*}
F_{ij}(x,u)=\begin{cases}
h\left(\sum_{k=1}^{m+1}u_k\right)\beta_j(x)u_{m+1}+h'\left(\sum_{k=1}^{m+1}u_k\right)\left(\sum_{k=1}^m\beta_k(x)u_k\right)u_{m+1}&i=1,1\le j\le m,\\
0& \text{otherwise},
\end{cases}
\end{equation*}
\begin{equation*}
V_{ij}(x,u)=\begin{cases}
\nu_i(x)+\gamma_i(x)&1\le i\le m,j=i,\\
-\nu_{i-1}(x)&2\le i\le m,j=i-1,\\
0& \text{otherwise},
\end{cases}
\end{equation*}
and
\begin{equation*}
B=\left(d_1\Delta,\dots,d_{m}\Delta\right)^T-V(x,u^0(x)).
\end{equation*}
And the basic reproduction number is given by \eqref{1r}. Also for model \eqref{stage}, $$\tilde u=\left(0,\dots,0,\ds\f{\int_\Omega \la dx}{\int_\Omega b dx}\right)\;\text{and} \;\;c(x)=\left(0,\dots,0,\ds\f{\la(x)}{b(x)}\right).$$
Since this model is more complex, we show that all the assumptions of Theorems \ref{bounds2} and \ref{bounds} are satisfied.
\begin{lemma}
The following statements hold.
\begin{enumerate}
\item [(i)] For any $a>0$ and $x\in\overline\Omega$, $-V(x,c(x))+aF(x,c(x))$ is irreducible.
\item [(ii)] $r(\check V^{-1}\check F)$ is the unique positive eigenvalue of $\check V^{-1}\check F$, where
$\check V^{-1}$ and $\check F$ are defined as in \eqref{int}.
\end{enumerate}
\end{lemma}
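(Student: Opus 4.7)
For part (i), I would use the combinatorial characterization of irreducibility: a matrix $A$ is irreducible iff for every nonempty proper subset $S\subsetneq\{1,\dots,m\}$ there exist $i\in S$ and $j\notin S$ with $A_{ij}\neq 0$. I first read off the structure of $A(x):=-V(x,c(x))+aF(x,c(x))$. The matrix $-V(x,c(x))$ is bidiagonal with negative diagonal $-(\nu_i(x)+\gamma_i(x))$ and strictly positive sub-diagonal $\nu_{i-1}(x)$ for $2\le i\le m$; all other entries vanish. The matrix $aF(x,c(x))$ is supported on the first row only, and since $c(x)=(0,\dots,0,\lambda(x)/b(x))$, a direct substitution into the formula for $F_{1j}$ kills the $h'$ term (because $\sum_{k=1}^m \beta_k u_k=0$) and leaves $F_{1j}(x,c(x))=h(\lambda(x)/b(x))\beta_j(x)\lambda(x)/b(x)>0$ for every $j=1,\dots,m$.

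Given any nonempty proper $S\subsetneq\{1,\dots,m\}$, I would split into two cases. If $1\in S$, pick any $j^{*}\notin S$; since $j^{*}\ge 2$ we have $A_{1,j^{*}}=aF_{1j^{*}}(x,c(x))>0$, and $(i,j)=(1,j^{*})$ works. If $1\notin S$, let $i^{*}=\min S\ge 2$; then $i^{*}-1\notin S$, and $A_{i^{*},i^{*}-1}=\nu_{i^{*}-1}(x)>0$, so $(i,j)=(i^{*},i^{*}-1)$ works. This handles all $S$, proving irreducibility for every $x\in\overline\Omega$ and $a>0$.

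For part (ii), the key observation is that $\check F$ has rank one: only the first row is nonzero, so $\check F=e_1 r^{T}$ with $r_j=\check F_{1j}=\int_\Omega F_{1j}(x,\tilde u)\,dx$. Evaluating at $\tilde u=(0,\dots,0,\int_\Omega\lambda\,dx/\int_\Omega b\,dx)$ gives $r_j=\int_\Omega h(\tilde u_{m+1})\beta_j(x)\tilde u_{m+1}\,dx>0$ for each $j$. Hence $\check V^{-1}\check F=(\check V^{-1}e_1)r^{T}$ is also rank one, with at most one nonzero eigenvalue equal to $r^{T}(\check V^{-1}e_1)=\sum_{j=1}^{m}\check F_{1j}(\check V^{-1})_{j1}$.

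It remains to verify that this lone nonzero eigenvalue is strictly positive, for then it must equal $r(\check V^{-1}\check F)$ and is automatically the unique positive eigenvalue (the other $m-1$ being zero). The matrix $\check V$ is lower bidiagonal with positive diagonal entries $D_i=\int_\Omega(\nu_i+\gamma_i)\,dx$ and non-positive sub-diagonal $-\int_\Omega\nu_{i-1}\,dx$, so forward substitution gives a closed formula showing that the entire first column of $\check V^{-1}$ is strictly positive (in fact $(\check V^{-1})_{j1}=\prod_{k=1}^{j-1}(\int_\Omega\nu_k\,dx)/\prod_{k=1}^{j}D_k$ for $j\ge 1$). Combining positivity of the first column of $\check V^{-1}$ with positivity of the first row of $\check F$ yields $\sum_j \check F_{1j}(\check V^{-1})_{j1}>0$, which completes the argument.

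The only slightly delicate step is the case split in part (i); everything in part (ii) reduces to the rank-one structure of $\check F$ and the explicit triangular form of $\check V^{-1}$, so no deeper spectral theory is required.
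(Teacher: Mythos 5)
Your proof is correct and follows essentially the same route as the paper's: both arguments rest on the explicit structure of $V$ and $F$ at the disease-free limit (strictly positive first row of $F(x,c(x))$, strictly positive sub-diagonal of $-V$) together with the explicit lower-triangular form of the first column of $\check V^{-1}$. The only cosmetic differences are that you certify irreducibility via the subset criterion rather than the paper's chains of nonzero entry products (strong connectivity of the adjacency digraph), and you extract the unique nonzero eigenvalue as the trace of the rank-one matrix $\check V^{-1}\check F$ rather than by writing out $\check F\check V^{-1}$ entrywise and reading off $\tilde a_{11}$; both computations produce the same quantity.
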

\begin{proof}
Let $Q_{ij}(x)=-V_{ij}(x,c(x))+aF_{ij}(x,c(x))$. Then a direct computation implies that
\begin{equation}
Q_{ij}(x)=\begin{cases}
a\beta_1(x)\ds\f{\la(x)}{b(x)}h\left(\f{\la(x)}{b(x)}\right)-\nu_1(x)-\gamma_1(x) &i=1,j=1,\\
a\beta_j(x)\ds\f{\la(x)}{b(x)}h\left(\f{\la(x)}{b(x)}\right)&i=1,\;2\le j\le m,\\
-\nu_i(x)-\gamma_i(x) &2\le i\le m,j=i,\\
\nu_{i-1}(x)&2\le i\le m, j=i-1,\\
0&otherwise.
\end{cases}
\end{equation}
For $i=1$, $2\le j\le m$, $$Q_{1j}=a\beta_j(x)\ds\f{\la(x)}{b(x)}h\left(\f{\la(x)}{b(x)}\right)\ne0,$$ and for any $2\le i\le m$, $j>i$, $$Q_{i(i-1)}\cdots Q_{21}Q_{1j}=a\nu_{i-1}(x)\cdots\nu_1(x)\beta_j(x)\ds\f{\la(x)}{b(x)}h\left(\f{\la(x)}{b(x)}\right)\ne0.$$
Similarly, for $1\le j\le m$, $i>j$, $$Q_{i(i-1)}Q_{(i-1)(i-2)}\cdots Q_{(j+1)j}=\nu_{i-1}(x)\cdots\nu_{j}(x)\ne0.$$
Therefore, $-V(x,c(x))+aF(x,c(x))$ is irreducible for any $a>0$ and $x\in\overline\Omega$. This completes the proof of part (i).

Let $\check V^{-1}=(\alpha_{ij})_{1\le i,j\le m}$. From \cite{Driessche} and a direct computation, we see that
\begin{equation}
\alpha_{ij}=\begin{cases}
0&1\le i\le m,\; j>i,\\
\ds\f{1}{\int_\Omega(\nu_i+\gamma_i)dx}&1\le i\le m,\; j=i,\\
\ds\f{\prod_{k=j}^{i-1}\int_\Omega\nu_kdx}{\prod_{k=j}^i\int_\Omega(\nu_k+\gamma_k)dx} &1\le i\le m, \; j<i.
\end{cases}
\end{equation}
Let $\check F\check V^{-1}=(\tilde \alpha_{ij})_{1\le i,j\le m}$. Then $\tilde a_{ij}=0$ for any $2\le i\le ,m$ and $1\le j\le m$,
and
\begin{equation}\label{tildea}
\tilde a_{11}=\left(\sum_{j=1}^m \f{\int_\Omega \beta_j dx\prod_{k=1}^{j-1}\int_\Omega \nu_kdx}{\prod_{k=1}^j\int_\Omega(\nu_k+\gamma_k)dx}\right)\ds\f{\int_\Omega \la dx}{\int_\Omega b dx}h\left(\ds\f{\int_\Omega \la dx}{\int_\Omega b dx}\right).
\end{equation}
Therefore, $r(\check V^{-1}\check F)=\tilde a_{11}$ is the unique positive eigenvalue of $\check V^{-1}\check F$.
This completes the proof of part (ii).
\end{proof}

The other assumptions of Theorems \ref{bounds2} and \ref{bounds} are easy to verified, and we omit the proof. Then we have the following results.
\begin{proposition}
Let $R_0$ be the basic reproduction number of model \eqref{stage}.
Then
\begin{enumerate}
\item [(i)]
$$\lim_{(d_1,\dots,d_{m+1})\to(0,\dots,0)}R_0=\max_{x\in\overline\Omega}\left(\sum_{j=1}^m \f{ \beta_j(x) \prod_{k=1}^{j-1} \nu_k(x)}{\prod_{k=1}^j(\nu_k(x)+\gamma_k(x))}\right)\f{\la(x)}{b(x)}h\left(\f{\la(x)}{b(x)}\right).$$
\item [(ii)] $$\lim_{(d_1,\dots,d_{m+1})\to(\infty,\dots,\infty)}R_0=\left(\sum_{j=1}^m \f{\int_\Omega \beta_j dx\prod_{k=1}^{j-1}\int_\Omega \nu_kdx}{\prod_{k=1}^j\int_\Omega(\nu_k+\gamma_k)dx}\right)\ds\f{\int_\Omega \la dx}{\int_\Omega b dx}h\left(\ds\f{\int_\Omega \la dx}{\int_\Omega b dx}\right).$$
\end{enumerate}
\end{proposition}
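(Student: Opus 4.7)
The plan is to invoke Theorems \ref{bounds2} and \ref{bounds} with the staged progression model playing the role of the abstract model \eqref{main}, so the bulk of the work is verifying the hypotheses of those theorems; the limit formulas themselves then follow from computations essentially parallel to the one already carried out in the preceding lemma.

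First I would dispatch the standing hypotheses. Assumptions (A1)--(A5) are immediate from the form of the reaction terms and the fact that $\la(x),b(x),\beta_i(x),\nu_i(x),\gamma_i(x)$ are all strictly positive and H\"older continuous; the cooperativity condition in (A5) on the $S$-equation holds because it is a scalar equation, and the sign of $s(d_{m+1}\Delta+M(x,u^0))$ follows from the fact that $-b(x)h(\cdots)u_{m+1}<0$ dominates. Assumptions (A7) and (A8) have already been noted, since $\hat S(x)$ tends to $\la(x)/b(x)$ as $d_{m+1}\to0$ and to $\int_\Omega\la/\int_\Omega b$ as $d_{m+1}\to\infty$, both in $C(\overline\Omega)$. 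Next, inspecting the explicit form of $V(x,u)$ shows that $\overline V_\epsilon^c$, $\underline V_\epsilon^c$, $\overline V_\epsilon$, and $\underline V_\epsilon$ are all lower bidiagonal with strictly positive diagonal and nonpositive subdiagonal, so $-\overline V_0^c$ and $-\overline V_0$ are cooperative and in fact the off-diagonal entries are either identically zero (and remain so for small $\epsilon$) or strictly positive; similarly $\underline F_0^c$ and $\underline F_0$ are positive since the only nonzero entries of $F$ are strictly positive for $u_{m+1}>0$. The spectral-bound conditions $s(\overline V_0^c)<0$, $s(-\underline V_0^c)<0$ follow from the fact that the diagonal of $V$ consists of $-(\nu_i+\gamma_i)<0$, and positivity of $r((\overline V_0^c)^{-1}\underline F_0^c)$ follows from the already-verified irreducibility of $-V(x,c(x))+aF(x,c(x))$ (which propagates to the perturbed matrices by monotonicity and continuity, as in Remark \ref{re1}). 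The analogous checks for Theorem \ref{bounds} are identical except with $c(x)$ replaced by $\tilde u$.

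For part (ii), Theorem \ref{bounds} reduces the claim to computing $r(\check V^{-1}\check F)$, but this has already been done: equation \eqref{tildea} in the preceding lemma gives $r(\check V^{-1}\check F)=\tilde\alpha_{11}$, which is exactly the right-hand side of (ii). For part (i), Theorem \ref{bounds2} yields
\[
\lim_{(d_1,\dots,d_n)\to(0,\dots,0)}R_0=\max_{x\in\overline\Omega}r\!\left(-V^{-1}(x,c(x))F(x,c(x))\right),
\]
and I would compute the inner spectral radius by repeating, pointwise in $x$, the triangular inversion carried out in the lemma: the matrix $V(x,c(x))$ has the same lower-bidiagonal structure as $\check V$, so $V^{-1}(x,c(x))$ has entries $\alpha_{ij}(x)$ given by the formulas in the lemma but with each integral $\int_\Omega g\,dx$ replaced by the pointwise value $g(x)$. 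Multiplying by $F(x,c(x))$, whose only nonzero row is the first, produces a matrix $FV^{-1}$ whose only nonzero entry is in position $(1,1)$, and that entry equals
\[
\left(\sum_{j=1}^m \f{\beta_j(x)\prod_{k=1}^{j-1}\nu_k(x)}{\prod_{k=1}^{j}(\nu_k(x)+\gamma_k(x))}\right)\f{\la(x)}{b(x)}h\!\left(\f{\la(x)}{b(x)}\right),
\]
which is therefore the unique nonzero eigenvalue and hence the spectral radius; taking the maximum over $x$ gives (i).

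The only mildly nontrivial step is the verification of irreducibility and uniqueness of the Perron eigenvalue for the pointwise matrix $-V^{-1}(x,c(x))F(x,c(x))$ needed to apply Theorem \ref{bounds2}, but this is exactly the content of the preceding lemma (part (i)) with $\int_\Omega g\,dx$ replaced by $g(x)$, and the same chain-of-products argument works verbatim. I do not anticipate any genuine obstacle; the proof is essentially bookkeeping to check that all hypotheses of the two general theorems apply.
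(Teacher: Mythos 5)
Your proposal is correct and follows essentially the same route as the paper: verify the hypotheses of Theorems \ref{bounds2} and \ref{bounds} (the only nontrivial ones --- irreducibility of $-V(x,c(x))+aF(x,c(x))$ and uniqueness of the Perron eigenvalue of $\check V^{-1}\check F$ --- being exactly the content of the preceding lemma) and then read off the two limits from the lower-triangular structure of $V$, pointwise for (i) and in averaged form for (ii). Two cosmetic slips worth fixing: the matrix $FV^{-1}$ has its entire first row nonzero rather than only the $(1,1)$ entry (the spectral radius is still the $(1,1)$ entry because the remaining rows vanish), and the diagonal of $V$ itself is $+(\nu_i+\gamma_i)$, so the negative spectral bounds are those of $-\overline V_0$ and $-\underline V_0$.
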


\section{Appendix}
In this part, we prove a result that verifies the continuity of functions $\underline F_\epsilon^c$, $\overline F_\epsilon^c$, $\underline V_\epsilon^c$, $\overline V_\epsilon^c$, $\underline F_\epsilon^x$, $\overline F_\epsilon^x$, $\underline V_\epsilon^x$ and $\overline V_\epsilon^x$, which are defined in
Eqs. \eqref{lowupper2} and \eqref{lowupper2x}.
\begin{proposition}\label{pro1}
Let $f(x,u)\in C(\overline\Omega\times \mathbb R,\mathbb R)$ and $c(x)\in C(\overline\Omega,\mathbb R)$, where $\Omega$ is a bounded domain in $\mathbb R^N$ $(N\ge1)$. Denote
$$\mathcal D_\epsilon^c=\{(x,u):x\in\overline\Omega, u\in[c(x)-\epsilon,c(x)+\epsilon]\},\;\;\mathcal D_\epsilon^x=\{u: u\in[c(x)-\epsilon,c(x)+\epsilon]\},$$
and
$$
H(\epsilon)=\max_{(x,u)\in \mathcal D_\epsilon^c}f(x,u),\;\;G(x,\epsilon)=\max_{u\in \mathcal D_\epsilon^x}f(x,u).
$$
Then $H(\epsilon)\in C([0,1],\mathbb R)$ and $G(x,\epsilon)\in C(\overline\Omega\times[0,1],\mathbb R)$.
\end{proposition}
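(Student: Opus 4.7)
The plan is to reduce continuity of $H$ to continuity of $G$, and then to prove continuity of $G$ by an upper/lower semicontinuity argument carried out via compactness plus an explicit rescaling of maximizers. The key observation is that
\begin{equation*}
H(\epsilon) = \max_{x \in \overline{\Omega}} G(x, \epsilon),
\end{equation*}
which is immediate from the definitions. Granted $G \in C(\overline{\Omega} \times [0,1], \mathbb{R})$, the compactness of $\overline{\Omega} \times [0,1]$ makes $G$ uniformly continuous, and the standard estimate $|\max_x G(x, \epsilon) - \max_x G(x, \epsilon_0)| \leq \max_x |G(x, \epsilon) - G(x, \epsilon_0)|$ then delivers $H \in C([0,1], \mathbb{R})$. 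So the main task is continuity of $G$.

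Fix $(x_0, \epsilon_0) \in \overline{\Omega} \times [0,1]$ and a sequence $(x_n, \epsilon_n) \to (x_0, \epsilon_0)$. Since $\mathcal{D}_\epsilon^x = [c(x) - \epsilon, c(x) + \epsilon]$ is compact, the maxima defining $G(x, \epsilon)$ are attained. For the upper bound, pick $u_n \in \mathcal{D}_{\epsilon_n}^{x_n}$ with $f(x_n, u_n) = G(x_n, \epsilon_n)$. The sequence $u_n$ is bounded (as $c$ is bounded on $\overline{\Omega}$ and $\epsilon_n \leq 1$), so after passing to a subsequence $u_{n_k} \to u^*$. The continuity of $c$ gives $|u^* - c(x_0)| = \lim |u_{n_k} - c(x_{n_k})| \leq \epsilon_0$, so $u^* \in \mathcal{D}_{\epsilon_0}^{x_0}$, and the continuity of $f$ yields
\begin{equation*}
\limsup_{n \to \infty} G(x_n, \epsilon_n) \leq f(x_0, u^*) \leq G(x_0, \epsilon_0).
\end{equation*}

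For the lower bound, choose a maximizer $u_0 \in \mathcal{D}_{\epsilon_0}^{x_0}$ of $G(x_0, \epsilon_0)$ and write $u_0 = c(x_0) + t_0$ with $|t_0| \leq \epsilon_0$. Define
\begin{equation*}
u_n = c(x_n) + \min(\epsilon_n/\epsilon_0, \, 1) \, t_0 \quad \text{if } \epsilon_0 > 0, \qquad u_n = c(x_n) \quad \text{if } \epsilon_0 = 0.
\end{equation*}
Then $|u_n - c(x_n)| = \min(\epsilon_n/\epsilon_0, 1) |t_0| \leq \epsilon_n$, so $u_n \in \mathcal{D}_{\epsilon_n}^{x_n}$; meanwhile $u_n \to c(x_0) + t_0 = u_0$ because $c(x_n) \to c(x_0)$ and $\min(\epsilon_n/\epsilon_0, 1) \to 1$. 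Hence $G(x_n, \epsilon_n) \geq f(x_n, u_n) \to f(x_0, u_0) = G(x_0, \epsilon_0)$, completing the argument.

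The only delicate step is this last one: when $u_0$ sits on the boundary of $\mathcal{D}_{\epsilon_0}^{x_0}$ and $\epsilon_n < \epsilon_0$, the naive choice $u_n = u_0$ may fall outside $\mathcal{D}_{\epsilon_n}^{x_n}$, while merely recentering by $c(x_n)$ (i.e.\ taking $u_n = c(x_n) + t_0$) may also fail. The rescaling factor $\min(\epsilon_n/\epsilon_0, 1)$ is introduced precisely to keep $u_n$ inside the shrinking interval $[c(x_n) - \epsilon_n, c(x_n) + \epsilon_n]$ while still guaranteeing $u_n \to u_0$; the two cases $\epsilon_0 > 0$ and $\epsilon_0 = 0$ are handled separately only because division by $\epsilon_0$ must be avoided in the latter, where the maximizer collapses to $c(x_0)$ anyway.
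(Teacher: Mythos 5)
Your proof is correct, but it follows a genuinely different route from the paper's. You derive $H$ from $G$ via the identity $H(\epsilon)=\max_{x\in\overline\Omega}G(x,\epsilon)$ together with uniform continuity of $G$ on the compact set $\overline\Omega\times[0,1]$, whereas the paper proves continuity of $H$ first and directly, by an explicit $\gamma$--$\delta$ argument: it exploits the monotonicity $H(\epsilon_1)\le H(\epsilon_2)$ for $\epsilon_1<\epsilon_2$ to get one inequality for free, and obtains the reverse inequality by perturbing the maximizer of $H(\epsilon_2)$ into $\mathcal D^c_{\epsilon_1}$ and invoking uniform continuity of $f$ on $\overline\Omega\times[C_1,C_2]$. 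For $G$, the paper again argues by uniform continuity with a two-case analysis (depending on whether the maximizer $u_1$ for $(x_1,\epsilon_1)$ already lies in $[c(x_2)-\epsilon_2,c(x_2)+\epsilon_2]$, or must be replaced by the nearby endpoint $c(x_2)+\epsilon_2$), while you use a sequential semicontinuity argument: compactness of maximizers for the upper bound, and for the lower bound the rescaled competitor $u_n=c(x_n)+\min(\epsilon_n/\epsilon_0,1)\,t_0$, which handles the boundary case without splitting into cases. Your argument is essentially a one-dimensional instance of Berge's maximum theorem and is arguably cleaner in structure; the paper's version produces the explicit modulus-of-continuity estimates but at the cost of more bookkeeping. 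One small point of rigor: in your upper bound you pass to a single convergent subsequence of maximizers and then assert a bound on $\limsup_n G(x_n,\epsilon_n)$ over the full sequence; strictly you should first extract a subsequence along which $G(x_n,\epsilon_n)$ tends to the $\limsup$ (or argue by contradiction) before applying compactness. This is a standard repair and not a genuine gap.
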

\begin{proof}
We first consider the continuity of $ H(\epsilon)$. Let
$$C_1:=\min_{x\in\overline\Omega}c(x)-2 \;\;\text{and}\;\; C_2=:\max_{x\in\overline\Omega}c(x)+2.$$
The continuity of $f(x,u)$ implies that $f(x,u)$ is uniformly continuous on $\overline\Omega\times [C_1,C_2]$.
Then, for any give $\gamma>0$, there exists $\delta>0$ such that, for any $(x_1,u_1), (x_2,u_2)\in\overline\Omega\times[C_1,C_2]$ satisfying $|x_1-x_2|<\delta$ and $|u_1-u_2|<\delta$,
\begin{equation}\label{rf}
|f(x_1,u_1)-f(x_2,u_2)|<\gamma.
\end{equation}
Assume that $0\le\epsilon_1<\epsilon_2\le1$ and $\epsilon_2-\epsilon_1<\delta$. Clearly, $H(\epsilon_1)\le H(\epsilon_2)$.
Noticing that $\mathcal D_{\epsilon_2}^c$ is compact, we see that there exists $(x_0,u_0)\in \mathcal D_{\epsilon_2}^c$ such that
$H(\epsilon_2)=f(x_0,u_0)$. Then there exists $(x_0,u_1)$ such that $(x_0,u_1)\in\mathcal D_{\epsilon_1}^c$ and $|u_1-u_0|<\delta$.
It follows from Eq. \eqref{rf} that
$f(x_0,u_0)< f(x_0,u_1)+\gamma$, which implies that
$H(\epsilon_2)<H(\epsilon_1)+\gamma$.
Then exchanging the position of $\epsilon_1$ and $\epsilon_2$, we can also obtain that,
for any $0\le\epsilon_2<\epsilon_1\le1$ and $\epsilon_1-\epsilon_2<\delta$,
$$H(\epsilon_2)\le H(\epsilon_1)\le H(\epsilon_2)+\gamma.$$
Therefore, for any given $\gamma>0$, there exists $\delta>0$ such that,
for any $\epsilon_1,\epsilon_2\in[0,1]$ satisfying $|\epsilon_1-\epsilon_2|<\delta$,
$$|H(\epsilon_1)-H(\epsilon_2)|<\gamma.$$ This implies that $H(\epsilon)\in C([0,1],\mathbb R)$.

Then we consider the continuity of $ G(x,\epsilon)$. Note that $c(x)$ is continuous. Then,
for the above $\delta$, there exists $\delta_1\in(0,\delta)$ such that, for any $x_1,x_2\in\overline\Omega$ satisfying $|x_1-x_2|<\delta_1$,
$$|c(x_1)-c(x_2)|<\delta/2.$$
Clearly, if $|\epsilon_1-\epsilon_2|<\delta/2$ and $|x_1-x_2|<\delta_1$, then
\begin{equation}\label{ccc}
|c(x_2)+\epsilon_2-c(x_1)-\epsilon_1|<\delta\;\;\text{and}\;\;|c(x_2)-\epsilon_2-c(x_1)+\epsilon_1|<\delta.
\end{equation}
Choose $(x_1,\epsilon_1),(x_2,\epsilon_2)\in\overline\Omega\times[0,1]$ satisfying
$$|x_1-x_2|, \;|\epsilon_1-\epsilon_2|<\delta_2,$$
where $\delta_2:=\min\{\delta/2,\delta_1\}$. Clearly, there exists $u_1\in[c(x_1)-\epsilon_1,c(x_1)+\epsilon_1]$
such that $G(x_1,\epsilon_1)=f(x_1,u_1)$. Then we claim that
$$G(x_1,\epsilon_1)< G(x_2,\epsilon_2)+\gamma,$$
and the proof is divided into two cases.\\
{\bf Case 1.} $u_1\in[c(x_2)-\epsilon_2,c(x_2)+\epsilon_2]$. \\
Since $|x_1-x_2|<\delta_2<\delta$, it follows from Eq. \eqref{rf} that 
$$G(x_1,\epsilon_1)=f(x_1,u_1)<f(x_2,u_1)+\gamma\le G(x_2,\epsilon_2)+\gamma.$$
{\bf Case 2.} $u_1\not\in[c(x_2)-\epsilon_2,c(x_2)+\epsilon_2]$. \\
Then $u_1> c(x_2)+\epsilon_2$ or $u_1<c(x_2)-\epsilon_2$. We only consider the case of $u_1> c(x_2)+\epsilon_2$, and the other case could be proved similarly. Then $ c(x_2)+\epsilon_2<u_1\le c(x_1)+\epsilon_1$, This, combined with Eq. \eqref{ccc}, implies that
$|c(x_2)+\epsilon_2-u_1|<\delta$. Then it follows from Eq. \eqref{rf} that
$$G(x_1,\epsilon_1)=f(x_1,u_1)<f\left(x_2,c(x_2)+\epsilon_2\right)+\gamma\le G(x_2,\epsilon_2)+\gamma.$$
Then exchanging the positions of $(x_1,\epsilon_1)$ and $(x_2,\epsilon_2)$, we also have
$$G(x_2,\epsilon_2)< G(x_1,\epsilon_1)+\gamma.$$ This implies that for any given $\gamma>0$, there exists $\delta_2>0$ such that,
for any $$(x_1,\epsilon_1), (x_2,\epsilon_2)\in\overline\Omega\times[0,1]$$ satisfying $|x_1-x_2|<\delta_2$ and $|\epsilon_1-\epsilon_2|<\delta_2$,
\begin{equation}\label{rf2}
|G(x_1,\epsilon_1)-G(x_2,\epsilon_2)|\le\gamma.
\end{equation}
This completes the proof.
\end{proof}


\begin{thebibliography}{10}

\bibitem{Allenpatch}
L.~J.~S. Allen, B.~M. Bolker, Y.~Lou, and A.~L. Nevai.
\newblock Asymptotic profiles of the steady states for an {$SIS$} epidemic
  patch model.
\newblock {\em SIAM J. Appl. Math.}, 67(5):1283--1309, 2007.

\bibitem{Allen}
L.~J.~S. Allen, B.~M. Bolker, Y.~Lou, and A.~L. Nevai.
\newblock Asymptotic profiles of the steady states for an {SIS} epidemic
  reaction-diffusion model.
\newblock {\em Discrete Contin. Dyn. Syst.}, 21(1):1--20, 2008.

\bibitem{Amann}
H.~Amann.
\newblock Fixed point equations and nonlinear eigenvalue problems in ordered
  {B}anach spaces.
\newblock {\em SIAM Rev.}, 18(4):620--709, 1976.

\bibitem{1991book}
R.~M. Anderson and R.~M. May.
\newblock {\em Infectious diseases of humans: dynamics and control}.
\newblock Oxford University Press, 1991.

\bibitem{Arino2003}
J.~Arino and P.~van~den Driessche.
\newblock A multi-city epidemic model.
\newblock {\em Math. Popul. Stud.}, 10(3):175--193, 2003.

\bibitem{Ait}
N.~Baca\"{e}r and E.~H. Ait~Dads.
\newblock On the biological interpretation of a definition for the parameter
  {$R_0$} in periodic population models.
\newblock {\em J. Math. Biol.}, 65(4):601--621, 2012.

\bibitem{BaiPengZhao}
Z.-G. Bai, R.~Peng, and X.-Q. Zhao.
\newblock A reaction-diffusion malaria model with seasonality and incubation
  period.
\newblock {\em J. Math. Biol.}, 77(1):201--228, 2018.

\bibitem{CaiWang}
Y.-L. Cai, Y.~Kang, M.~Banerjeed, and W.-M. Wang.
\newblock Complex dynamics of a host-parasite model with both horizontal and
  vertical transmissions in a spatial heterogeneous environment.
\newblock {\em Nonlinear Anal. Real World Appl.}, 40:444--465, 2018.

\bibitem{CaoLiYang}
J.-F. Cao, W.-T. Li, and F.-Y. Yang.
\newblock Dynamics of a nonlocal {SIS} epidemic model with free boundary.
\newblock {\em Discrete Contin. Dyn. Syst. Ser. B}, 22(2):247--266, 2017.

\bibitem{CuiLamLou}
R.-H. Cui, K.-Y. Lam, and Y.~Lou.
\newblock Dynamics and asymptotic profiles of steady states of an epidemic
  model in advective environments.
\newblock {\em J. Differential Equations}, 263(4):2343--2373, 2017.

\bibitem{CuiLou}
R.-H. Cui and Y.~Lou.
\newblock A spatial {SIS} model in advective heterogeneous environments.
\newblock {\em J. Differential Equations}, 261(6):3305--3343, 2016.

\bibitem{DengWu}
K.~Deng and Y.-X. Wu.
\newblock Dynamics of a susceptible-infected-susceptible epidemic
  reaction-diffusion model.
\newblock {\em Proc. Roy. Soc. Edinburgh Sect. A}, 146(5):929--946, 2016.

\bibitem{Diekmann}
O.~Diekmann, J.~A.~P. Heesterbeek, and J.~A.~J. Metz.
\newblock On the definition and the computation of the basic reproduction ratio
  {$R_0$} in models for infectious diseases in heterogeneous populations.
\newblock {\em J. Math. Biol.}, 28(4):365--382, 1990.

\bibitem{Feng1997}
Z.-L. Feng and J.~X. Velasco-Hern\'{a}ndez.
\newblock Competitive exclusion in a vector-host model for the dengue fever.
\newblock {\em J. Math. Biol.}, 35(5):523--544, 1997.

\bibitem{Web2017}
W.~E. Fitzgibbon, J.~J. Morgan, and G.~F. Webb.
\newblock An outbreak vector-host epidemic model with spatial structure: The
  2015--2016 {Z}ika outbreak in {R}io {D}e {J}aneiro.
\newblock {\em Theor. Biol. Med. Model.}, 14:7, 2017.

\bibitem{LinZhu2}
J.~Ge, K.~I. Kim, Z.-G. Lin, and H.-P. Zhu.
\newblock A {SIS} reaction-diffusion-advection model in a low-risk and
  high-risk domain.
\newblock {\em J. Differential Equations}, 259(10):5486--5509, 2015.

\bibitem{GuoLi}
H.-B. Guo and M.~Y. Li.
\newblock Global dynamics of a staged progression model for infectious
  diseases.
\newblock {\em Math. Biosci. Eng.}, 3(3):513--525, 2006.

\bibitem{HuangJin}
Q.-H. Huang, Y.~Jin, and M.~A. Lewis.
\newblock {$R_0$} analysis of a {B}enthic-drift model for a stream population.
\newblock {\em SIAM J. Appl. Dyn. Syst.}, 15(1):287--321, 2016.

\bibitem{stanley}
J.~M. Hyman, J.~Li, and E.~A. Stanley.
\newblock The differential infectivity and staged progression models for the
  transmission of {H}{I}{V}.
\newblock {\em Math. Biosci.}, 155:77--109, 1999.

\bibitem{Kato}
T.~Kato.
\newblock {\em Perturbation {T}heory for {L}inear {O}perators}.
\newblock Springer-Verlag, Berlin, 1995.
\newblock Reprint of the 1980 edition.

\bibitem{KuoPeng}
K.~Kuto, H.~Matsuzawa, and R.~Peng.
\newblock Concentration profile of endemic equilibrium of a
  reaction-diffusion-advection {SIS} epidemic model.
\newblock {\em Calc. Var. Partial Differential Equations}, 56(4):112, 2017.

\bibitem{LaiZou}
X.-L. Lai and X.-F. Zou.
\newblock Repulsion effect on superinfecting virions by infected cells.
\newblock {\em Bull. Math. Biol.}, 76(11):2806--2833, 2014.

\bibitem{LamLou}
K.-Y. Lam and Y.~Lou.
\newblock Asymptotic behavior of the principal eigenvalue for cooperative
  elliptic systems and applications.
\newblock {\em J. Dynam. Differential Equations}, 28(1):29--48, 2016.

\bibitem{WangFB2}
H.-C. Li, R.~Peng, and F.-B. Wang.
\newblock Varying total population enhances disease persistence: qualitative
  analysis on a diffusive {SIS} epidemic model.
\newblock {\em J. Differential Equations}, 262(2):885--913, 2017.

\bibitem{zhaoperi2}
X.~Liang, L.~Zhang, and X.-Q. Zhao.
\newblock Basic reproduction ratios for periodic abstract functional
  differential equations (with application to a spatial model for {L}yme
  {D}isease).
\newblock {\em J. Dynam. Differential Equations}, 31(3):1247--1278, 2019.

\bibitem{WangFB1}
H.-l. Lin and F.-B. Wang.
\newblock Global dynamics of a nonlocal reaction-diffusion system modeling the
  {W}est {N}ile virus transmission.
\newblock {\em Nonlinear Anal. Real World Appl.}, 46:352--373, 2019.

\bibitem{LinZhu}
Z.-G. Lin and H.-P. Zhu.
\newblock Spatial spreading model and dynamics of {W}est {N}ile virus in birds
  and mosquitoes with free boundary.
\newblock {\em J. Math. Biol.}, 75(6-7):1381--1409, 2017.

\bibitem{lloyd1996spatial}
A.~L. Lloyd and R.~M. May.
\newblock Spatial heterogeneity in epidemic models.
\newblock {\em J. Theoret. Biol.}, 179(1):1--11, 1996.

\bibitem{LouZhao}
Y.-J. Lou and X.-Q. Zhao.
\newblock A reaction-diffusion malaria model with incubation period in the
  vector population.
\newblock {\em J. Math. Biol.}, 62(4):543--568, 2011.

\bibitem{WebWu2018}
P.~Magal, G.~F. Webb, and Y.-X. Wu.
\newblock On a vector-host epidemic model with spatial structure.
\newblock {\em Nonlinearity}, 31(12):5589--5614, 2018.

\bibitem{MagalWu}
P.~Magal, G.~F. Webb, and Y.-X. Wu.
\newblock On the basic reproduction number of reaction-diffusion epidemic
  models.
\newblock {\em SIAM J. Appl. Math.}, 79(1):284--304, 2019.

\bibitem{JinLewis}
H.~W. Mckenzie, Y.~Jin, J.~Jacobsen, and M.~A. Lewis.
\newblock {$R_0$} analysis of a spatiotemporal model for a stream population.
\newblock {\em SIAM J. Appl. Dyn. Syst.}, 11(2):567--596, 2012.

\bibitem{Peng2009}
R.~Peng.
\newblock Asymptotic profiles of the positive steady state for an {SIS}
  epidemic reaction-diffusion model. {I}.
\newblock {\em J. Differential Equations}, 247(4):1096--1119, 2009.

\bibitem{PengLiu}
R.~Peng and S.-Q. Liu.
\newblock Global stability of the steady states of an {S}{I}{S} epidemic
  reaction-diffusion model.
\newblock {\em Nonlinear Anal.}, 71:239--247, 2009.

\bibitem{Peng2013}
R.~Peng and F.-Q. Yi.
\newblock Asymptotic profile of the positive steady state for an {SIS} epidemic
  reaction-diffusion model: effects of epidemic risk and population movement.
\newblock {\em Phys. D}, 259:8--25, 2013.

\bibitem{PengZhao}
R.~Peng and X.-Q. Zhao.
\newblock A reaction-diffusion {SIS} epidemic model in a time-periodic
  environment.
\newblock {\em Nonlinearity}, 25(5):1451--1471, 2012.

\bibitem{Smith1995}
H.~L. Smith.
\newblock {\em Monotone {D}ynamical {S}ystems: {A}n {I}ntroduction to the
  {T}heory of {C}ompetitive and {C}ooperative {S}ystems}.
\newblock American Mathematical Society, Providence, RI, 1995.

\bibitem{Thieme2009}
H.~R. Thieme.
\newblock Spectral bound and reproduction number for infinite-dimensional
  population structure and time heterogeneity.
\newblock {\em SIAM J. Appl. Math.}, 70(1):188--211, 2009.

\bibitem{Tien2015}
J.~H. Tien, Z.-S. Shuai, M.~C. Eisenberg, and P.~van~den Driessche.
\newblock Disease invasion on community networks with environmental pathogen
  movement.
\newblock {\em J. Math. Biol.}, 70(5):1065--1092, 2015.

\bibitem{Driessche}
P.~van~den Driessche and J.~Watmough.
\newblock Reproduction numbers and sub-threshold endemic equilibria for
  compartmental models of disease transmission.
\newblock {\em Math. Biosci.}, 180:29--48, 2002.

\bibitem{WangLiWang}
B.-G. Wang, W.-T. Li, and Z.-C. Wang.
\newblock A reaction-diffusion {SIS} epidemic model in an almost periodic
  environment.
\newblock {\em Z. Angew. Math. Phys.}, 66(6):3085--3108, 2015.

\bibitem{WangZhao}
W.-D. Wang and X.-Q. Zhao.
\newblock Basic reproduction numbers for reaction-diffusion epidemic models.
\newblock {\em SIAM J. Appl. Dyn. Syst.}, 11(4):1652--1673, 2012.

\bibitem{WangZhaoWang}
X.-Y. Wang, X.-Q. Zhao, and J.~Wang.
\newblock A cholera epidemic model in a spatiotemporally heterogeneous
  environment.
\newblock {\em J. Math. Anal. Appl.}, 468(2):893--912, 2018.

\bibitem{WuZhao}
R.-W. Wu and X.-Q. Zhao.
\newblock A reaction-diffusion model of vector-borne disease with periodic
  delays.
\newblock {\em J. Nonlinear Sci.}, 29(1):29--64, 2019.

\bibitem{WuZou}
Y.-X. Wu and X.-F. Zou.
\newblock Asymptotic profiles of steady states for a diffusive {SIS} epidemic
  model with mass action infection mechanism.
\newblock {\em J. Differential Equations}, 261(8):4424--4447, 2016.

\bibitem{zhaoperi}
X.-Q. Zhao.
\newblock Basic reproduction ratios for periodic compartmental models with time
  delay.
\newblock {\em J. Dynam. Differential Equations}, 29(1):67--82, 2017.

\end{thebibliography}
\end{document}